\newtheorem*{unnumthm}{Theorem~\ref{thm:amovessuffice}}
\newtheorem*{unnumthem}{Theorem~\ref{thm:cone}}
\newcommand{\from}{\colon}  
\newcommand{\numset}[1]{\mathbb{#1}}
\newcommand{\R}{\numset{R}}
\def\@tocline#1#2#3#4#5#6#7{\relax
  \ifnum #1>\c@tocdepth 
  \else
    \par \addpenalty\@secpenalty\addvspace{#2}%
    \begingroup \hyphenpenalty\@M
    \@ifempty{#4}{%
      \@tempdima\csname r@tocindent\number#1\endcsname\relax
    }{%
      \@tempdima#4\relax
    }%
    \parindent\z@ \leftskip#3\relax \advance\leftskip\@tempdima\relax
    \rightskip\@pnumwidth plus4em \parfillskip-\@pnumwidth
    #5\leavevmode\hskip-\@tempdima
      \ifcase #1
       \or\or \hskip 1em \or \hskip 2em \else \hskip 3em \fi%
      #6\nobreak\relax
    \dotfill\hbox to\@pnumwidth{\@tocpagenum{#7}}\par
    \nobreak
    \endgroup
  \fi}
\newtheorem{theorem}{Theorem}[section]
\newtheorem{corollary}{Corollary}[section]
\newtheorem{proposition}{Proposition}[section]
\newtheorem{lemma}{Lemma}[section]
\newtheorem*{geometric-theorem}{Geometric Input Theorem}
\theoremstyle{definition}
\newtheorem{definition}{Definition}[section]
\newtheorem{example}{Example}[section]
\newtheorem{question}{Question}[section]
\newtheorem{remark}{Remark}[section]
\let\c@conjecture=\c@theorem
\let\c@corollary=\c@theorem
\let\c@proposition=\c@theorem
\let\c@lemma=\c@theorem
\let\c@definition=\c@theorem
\let\c@example=\c@theorem
\let\c@remark=\c@theorem
\let\c@equation\c@theorem
\let\c@question\c@theorem
\def\makeautorefname#1#2{\expandafter\def\csname#1autorefname\endcsname{#2}}
\numberwithin{equation}{section}
\newcommand*{\Alphabet}{ABCDEFGHIJKLMNOPQRSTUVWXYZ1234567890}
\newcommand*{\alphabet}{abcdefghijklmnopqrstuvwxyz1234567890}
\newlength\fcaph
\newlength\fdesc
\newlength\factualfontsize
\numberwithin{equation}{section}
\newcounter{commentcounter}
\title{Arc diagrams on 3-manifold spines}
\author[J. Brand]{Jack Brand}
\address{The Australian National University}
\email{\href{mailto:jack.brand@anu.edu.au}{jack.brand@anu.edu.au}}
\author[B. A. Burton]{Benjamin A. Burton}
\address{The University of Queensland}
\email{\href{mailto:bab@maths.uq.edu.au}{bab@maths.uq.edu.au}}
\urladdr{\url{http://www.maths.uq.edu.au/~bab/}}
\author[Z. Dancso]{Zsuzsanna Dancso}
\address{University of Sydney}
\email{\href{mailto:zsuzsanna.dancso@sydney.edu.au}{zsuzsanna.dancso@sydney.edu.au}}
\urladdr{\url{http://zsuzsannadancso.net}}
\author[A. He]{Alexander He}
\address{The University of Queensland}
\email{\href{mailto:a.he@uqconnect.edu.au}{a.he@uqconnect.edu.au}}
\urladdr{\url{https://sites.google.com/view/alex-he}}
\author[A. Jackson]{Adele Jackson}
\address{University of Oxford}
\email{\href{mailto:adele.jackson@maths.ox.ac.uk}{adele.jackson@maths.ox.ac.uk}}
\urladdr{\url{http://adelejackson.com}}
\author[J. Licata]{Joan Licata}
\address{The Australian National University}
\email{\href{mailto:joan.licata@anu.edu.au}{joan.licata@anu.edu.au}}
\urladdr{\url{https://sites.google.com/view/joanlicata/home}}
\keywords{
    link diagram, projection, spine, arc diagram, lightbulb trick, shadow equivalence, shadow link}
\def\subjclassname{\textup{2020} Mathematics Subject Classification}
\let\csname subjclassname@1991\endcsname=\subjclassname
\let\csname subjclassname@2000\endcsname=\subjclassname
\subjclass{
    57K10, 57M15
    \hfill
    Date: \today
}
\begin{document}

\begin{abstract}
    We develop a theory of link projections to trivalent spines of 3-manifolds.  We prove a Reidemeister Theorem providing a set of combinatorial moves sufficient to relate the projections of isotopic links.  We also show that any link admits a crossingless projection to any special spine and we refine our theorem to provide a set of combinatorial moves sufficient to relate crossingless diagrams.  Finally, we discuss the connection to Turaev's shadow world, interpreting our result as a statement about shadow equivalence of a class of 4-manifolds.
\end{abstract}

\maketitle



\tableofcontents

\section{Introduction}
\label{sec:intro}

Nearly a century after the foundational work of Reidemeister \cite{Reidemeister} and Alexander-Briggs \cite{AlexBriggs}, projections continue to play an essential role in link theory.
Reidemeister-type moves have similarly been used to study objects other than links in the $3$-sphere,
such as braids \cite{Artin}, immersed planar curves~\cite{Arnold, Titus}, links in thickened surfaces up to stabilisation (also known as virtual links) \cite{Kuperberg, Kauffman}, and braided ribbon tubes in $\R^4$ (welded braids) \cite{BrendleHatcher, FennRimanyiRourke}.These examples merely sample the broad body of related work in combinatorial topology.

This paper introduces new techniques for studying links in arbitrary 3-manifolds via projections to 2-dimensional spines.  Spines, which are widely employed in combinatorial topology, are distinguished by their universality: every compact orientable 3-manifold admits a spine, and typically the spine provides sufficient data to reconstruct the original manifold.  Just as a knot in $\mathbb{R}^3$ is determined up to isotopy by its diagram on a plane of projection, we show that diagrams on spines determine knots in arbitrary 3-manifolds, and we prove an analogue of Reidemeister's Theorem relating the projections of isotopic knots.

The choice to complicate the space where the projection lives allows us to greatly simplify the projection.  This philosophy generalises work of Cromwell \cite{Cromwell1998} and Dynnikov \cite{DynnikovThreePage}, who studied crossingless projections of knots to a three-page book.  We define a class of spines -- namely, \emph{trivalent spines} -- that generalises Matveev's special spines~\cite{Matveev}, and we show that for any such trivalent spine and any link $L$, there exists a link isotopic to $L$ whose projection to the spine has no crossings.  Our main theorem establishes a set of combinatorial moves sufficient to relate any two crossingless diagrams of isotopic links  on a  fixed trivalent spine.

\begin{unnumthm} Crossingless projections of isotopic links may be related by a sequence of Finger, Vertex, and Exchange moves.

\end{unnumthm}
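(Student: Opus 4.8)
The plan is to bootstrap from the Reidemeister Theorem for general spine diagrams (those allowed crossings), which is already available, using a \emph{straightening} operation that deletes crossings. The proof of the crossingless-projection theorem can be organised as a local procedure $\operatorname{str}$ which, given any diagram $D$ on the fixed trivalent spine, returns a crossingless diagram $\operatorname{str}(D)$ of the same link: repeatedly pick a crossing, push one of the two arcs through it across the adjacent face---and, when forced, across an edge of the $1$-skeleton or past a trivalent vertex---until the crossing is gone, with $\operatorname{str}$ acting as the identity on crossingless diagrams. Since the procedure involves choices, the crucial preliminary is an \emph{independence lemma}: any two complete runs of $\operatorname{str}$ on a given diagram terminate in crossingless diagrams related by Finger, Vertex, and Exchange moves. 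Granting this, the theorem follows by a movie argument. Let $L_0$ and $L_1$ be isotopic links with crossingless diagrams $D_0$ and $D_1$. The general Reidemeister Theorem provides a finite chain $D_0 = E_0 \to E_1 \to \dots \to E_n = D_1$ of its elementary moves, in which the intermediate $E_i$ may carry crossings. Straightening termwise produces crossingless diagrams $\operatorname{str}(E_0) = D_0, \dots, \operatorname{str}(E_n) = D_1$, and it remains only to show that each step $\operatorname{str}(E_i) \to \operatorname{str}(E_{i+1})$ is realised by Finger, Vertex, and Exchange moves; concatenating these realisations yields the sequence claimed. (A more hands-on alternative would put the ambient isotopy in general position and follow the diagram through its bifurcations while rerouting to stay crossingless, but reusing the general theorem is cleaner.)

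The remaining point is a \emph{move-compatibility lemma}, proved by running through the elementary moves of the general calculus one at a time. For the moves that create or cancel a crossing or slide one strand over another within a single face, $E_i$ and $E_{i+1}$ agree outside a small disc; straightening both sides the same way outside the disc and then resolving the crossings inside, the two local pictures straighten to the same crossingless configuration up to the independence lemma. The remaining elementary moves are local rearrangements of arcs---sliding an arc across an edge, pushing it past a vertex, isotoping it within a face---and for each one straightening the two sides is checked to differ by a bounded number of Finger and Vertex moves, preceded if necessary by Exchange moves that restore the cyclic order in which arcs meet each edge. All of this is finite, local bookkeeping, but it is precisely where the definitions of the three moves are calibrated: the assertion is that the groupoid they generate absorbs every elementary move of the general calculus once crossings have been removed.

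The main obstacle is the independence lemma. A crossingless diagram is recorded, along each edge of the $1$-skeleton, by a cyclically ordered finite set of transverse points together with the routing of arcs through the two incident faces, and distinct straightenings of one diagram can both permute these points and reroute arcs. One must show that any two such states representing the same link are connected by Exchange moves---which transpose points adjacent along an edge---after absorbing Finger and Vertex moves, and that a crossing being pushed around cannot migrate indefinitely before cancelling. This is the spine-theoretic analogue of the delicate reordering and monotonic-simplification arguments in Dynnikov's analysis of three-page embeddings and in the Birman--Menasco braid calculus; I expect it to require a ladder-type normal form for the arc orderings along the $1$-skeleton together with a complexity/discharging argument bounding the propagation of a single crossing. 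A secondary subtlety is that straightening can spawn small trivial arcs in previously empty faces, so the Finger move---or an explicit stabilisation clause---must allow creating and destroying such arcs; alternatively, one compares the two competing diagrams to a common stabilisation.
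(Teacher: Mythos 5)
Your high-level architecture matches the paper's: invoke the general spine-Reidemeister theorem, introduce a crossing-removal operation, and reduce the theorem to two lemmas---that the operation is independent of its internal choices up to arc moves, and that it is compatible with each elementary move of the general calculus up to arc moves. You also correctly flag the independence lemma as the crux.

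The gap is that you leave that crux unproved and sketch a strategy (a ladder-type normal form for the cyclic orderings of arcs along the $1$-skeleton, plus a discharging/complexity argument to bound the wandering of a crossing) without carrying it out. This is not the route the paper takes, and it is unclear it would close cleanly: a crossing being pushed around a face can be ``cleared'' in any of several directions and across several edges, and the resulting arc diagrams differ in ways that are not merely permutations of edge-intersection points, so a normal form on the edge orderings alone does not obviously control the difference. The paper instead formalises a crossing-removal as a \emph{monotonic clearing} and encodes it as a \emph{clearing forest}: an ordered spanning subforest of the $4$-valent graph $\Gamma(T,F)$ on each face, with each tree rooted at a boundary-incident vertex. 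The Clearing Forest Lemma sets up the bijection; the Reordering Lemma handles admissible reorderings; the Bulk Finger and Finger Change lemmas (proved by induction on tree size) let one propagate a Finger move through an entire clearing tree; and the Bulk Clearing Lemma handles a change of clearing direction at a single crossing. Proposition~\ref{prop:choiceofforest} then assembles these by induction on the number of crossings. That combinatorial scaffold, not a normal form, is what makes the independence lemma go through.

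One further point: your worry that straightening can ``spawn small trivial arcs in previously empty faces'' does not arise in the paper's treatment, because a Clearing move (C1 or C2) slides a crossing across an edge rather than creating new closed components or free arcs; the only auxiliary normalisation needed is that every link component meet $\Sigma^{(1)}$ at least once, which is arranged by a preliminary Finger move. So no stabilisation clause is required. Also note that in the paper's statement the Exchange move is derivable from the other moves (Remark~\ref{rmk:Exchange move}); it is singled out precisely because it is the only derived move needed to keep the arc-move calculus crossingless, which is the calibration you gesture at in your move-compatibility discussion.
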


We also discuss the connection between our combinatorial moves and Turaev's theory of shadows.  Just as a spine determines a $3$-manifold, a spine decorated with certain labels determines a 4-manifold \cite{Turaev}.  There are well established combinatorial moves on such {\it shadowed polyhedra} that preserve the associated 4-manifold up to PL homeomorphism, and we interpret our theorem about crossingless diagrams as a result about equivalence among a particular class of shadows.

\begin{unnumthem}

 Suppose that $D_1$ and $D_2$ are crossingless projections for isotopic framed links in a $3$-manifold $M$ with trivalent spine $\Sigma$.   Then the shadow cones $CO(\Sigma, D_1)$ and $CO(\Sigma, D_2)$ are shadow equivalent.
\end{unnumthem}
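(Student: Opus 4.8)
The plan is to deduce this from Theorem~\ref{thm:amovessuffice} by translating each of the three diagram moves into a sequence of standard shadow moves. Recall that shadow equivalence is, by definition, the equivalence relation generated by the basic shadow moves $P_1^{\pm 1}, P_2^{\pm 1}, P_3^{\pm 1}$ together with isomorphism of shadowed polyhedra, and that each of these preserves the associated $4$-manifold up to PL homeomorphism rel boundary; in particular shadow equivalence is transitive. By Theorem~\ref{thm:amovessuffice} there is a finite sequence of Finger, Vertex, and Exchange moves carrying $D_1$ to $D_2$, passing only through crossingless diagrams of links isotopic to the given framed link. Since $CO(\Sigma, D)$ is built from $\Sigma$ by coning each component of $D$ off with a $2$-cell carrying the gleam forced by the framing, it suffices to prove the local statement: if $D$ and $D'$ differ by a single Finger, Vertex, or Exchange move, then $CO(\Sigma, D)$ and $CO(\Sigma, D')$ are shadow equivalent. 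Concatenating the resulting equivalences then yields the theorem.

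I would treat the three moves in turn. An Exchange move is supported in the closure of a single region of $\Sigma$ and interchanges two parallel arcs of $D$ without crossing any edge or true vertex of the spine; on the level of cones it is an ambient isotopy of the coning $2$-cells inside $\Sigma$ rel $\bdy$, hence induces an \emph{isomorphism} of shadowed polyhedra with the gleams untouched, and contributes no shadow moves. A Finger move pushes an arc of $D$ across an edge of $\Sigma$ (and back), changing the set of regions through which $D$ runs; I would realise the induced change on $CO(\Sigma, D)$ as a $P_2$ move creating the new bigon-shaped sliver of the coning disk that straddles the edge, followed by $P_1$ moves that correct the gleams of the affected regions. A Vertex move pushes an arc of $D$ past a true vertex of $\Sigma$, and here the local picture of the cone is exactly the one governed by the $P_3$ move, again followed by $P_1$ corrections. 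In each case the argument amounts to drawing the before-and-after local pictures of the cone and exhibiting the explicit shadow-move sequence; what must be verified is that the net change in gleams is precisely the one forced by leaving the framed link—and hence the $4$-manifold obtained by attaching $2$-handles along $L$—unchanged.

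The main obstacle is the gleam bookkeeping. The blackboard framing read off from the projection jumps discontinuously as an arc of $D$ is pushed across an edge or a vertex of $\Sigma$, so the gleams assigned to the coning $2$-cells must be chosen to compensate exactly; confirming that this compensation is effected by a legal sequence of $P_1^{\pm 1}$ moves, rather than merely by some PL homeomorphism of the underlying $4$-manifolds, is the crux of the proof. A related subtlety is to check that the only shadow moves ever needed are $P_1$, $P_2$, $P_3$ and polyhedron isomorphisms—i.e.\ that no stabilisation (connected sum with a $\CP{2}$-type summand) is required at any stage—which is exactly what makes the conclusion ``shadow equivalent'' rather than only ``stably shadow equivalent.'' Once these three local verifications are carried out, transitivity of shadow equivalence closes the argument.
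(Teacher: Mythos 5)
Your proposal takes essentially the same approach as the paper: the paper's entire argument is to invoke Theorem~\ref{thm:amovessuffice} and then observe that each arc move (Finger, Vertex, Exchange) can be expressed as a composition of shadow moves on the associated shadow cones, which is precisely the reduction you set up. The paper does not spell out the local verifications or the gleam bookkeeping at all (it simply calls them ``straightforward''), so your more detailed sketch of which basic shadow moves should realise each arc move, and your identification of the gleam-accounting and the no-stabilisation issue as the crux, goes beyond the text but in the same direction. One small caution on your elaboration: since the Exchange move is supported in a neighbourhood of an \emph{arc} rather than a point (Remark~\ref{rmk:Exchange move}) and reorders two strands as they pass across an edge of $\Sigma$, the corresponding change to the cone may genuinely alter the combinatorial cell structure rather than inducing a bare isomorphism of shadowed polyhedra, so that case likely also requires some $P$-moves rather than none.
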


This is particularly interesting in light of the fact that fundamental questions about shadow equivalence remain open.

\subsection*{Data Availability Statement} Data sharing is not applicable to this article as no datasets were generated or analysed during the current study.

\subsection*{Acknowledgements}
This work originates in the MATRIX-MFO Tandem Workshop in Topology. We thank the MATRIX and MFO institutes for hosting the workshop virtually and in person, respectively. We are grateful to Saul Schleimer for his significant early contributions to this research at the Workshop and to Henry Segerman for interesting subsequent discussion. We thank the anonymous referees for their thoughtful comments and suggestions.

AH was supported by an Australian Government Research Training Program Scholarship.
AJ was supported by the Clarendon Fund, the Oxford-Australia Trust and Lincoln College.

\section{Definitions}
Throughout this paper, let $M$ be a compact, connected, oriented 3-manifold.
The following definitions use ideas from~\cite{Matveev}.

\begin{definition}\label{def:subpolyhedron}
 An \emph{admissible polyhedron} is a finite CW-complex $\Sigma$ such that each point $x$ in $\Sigma$ has a neighbourhood homeomorphic to one of the three configurations (see Figure~\ref{fig:spinePointTypes}):
\begin{enumerate}
   \item a disc;
    \item three discs intersecting along a common arc in their boundaries, where the point $x$ is on the common arc; or
    \item the \emph{butterfly}, which is a configuration built from four arcs that meet at the point $x$, with a face running along each of the six possible pairs of arcs.
\end{enumerate}
Given an admissible polyhedron $\Sigma$, we write $\Sigma^{(0)}$ for the points with butterfly neighbourhoods, $\Sigma^{(1)}$ for the points with three-disc neighbourhoods, and $\Sigma^{(2)}$ for the points with disc neighbourhoods.
We refer to the connected components of $\Sigma^{(0)}$, $\Sigma^{(1)}$ and $\Sigma^{(2)}$ as \emph{vertices}, \emph{edges} and \emph{faces}, respectively.
\end{definition}

We are specifically interested in admissible polyhedra that are spines of orientable 3-manifolds, so we introduce some further conditions below.

\begin{figure}[!ht]
    \centering
    \begin{subfigure}[t]{0.3\textwidth}
          \includegraphics[width=\textwidth]{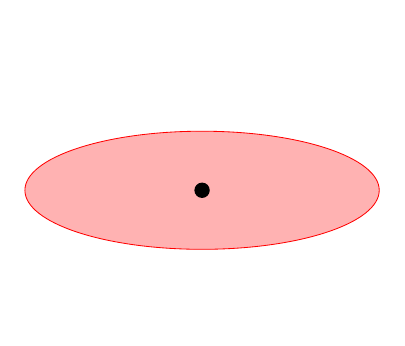}
          \caption{A point in $\Sigma^{(2)}$, with a disc neighbourhood.}
    \end{subfigure}
    \hfill
    \begin{subfigure}[t]{0.3\textwidth}
          \includegraphics[width=\textwidth]{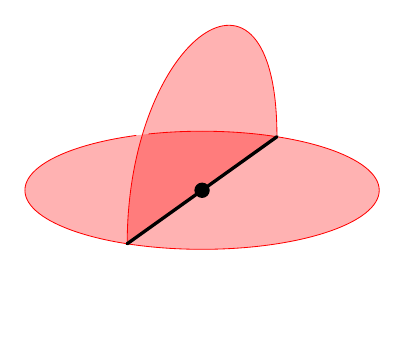}
          \caption{A point in $\Sigma^{(1)}$, whose neighbourhood is three discs touching along an arc.}
    \end{subfigure}
    \hfill
    \begin{subfigure}[t]{0.3\textwidth}
          \includegraphics[width=\textwidth]{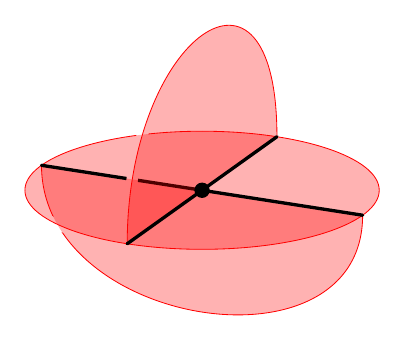}
          \caption{A point in $\Sigma^{(0)}$, with a butterfly neighbourhood.}
    \end{subfigure}
    \caption{The three types of points in an admissible polyhedron $P$.}
    \label{fig:spinePointTypes}
\end{figure}

\begin{definition}\label{def:triv}
A \emph{trivalent spine} $\Sigma$ for a manifold $M$ (possibly with boundary) is an admissible polyhedron tamely embedded in $M$ and satisfying the following conditions:
\begin{enumerate}
    \item\label{product} there is a disjoint union $Z = Z_\Sigma$ of finitely many small open balls in $M$  such that $ M - (\Sigma \cup Z)$ is homeomorphic to $\partial (M - Z)\times [0,1)$;
    \item  $\rho = \rho_\Sigma : (M - Z) \to \Sigma$ is a deformation retraction respecting the product structure in \ref{product}; and
    \item \label{nonempty} $\Sigma^{(1)}$ is not empty.
\end{enumerate}
\end{definition}

\noindent The condition that a trivalent spine $\Sigma$ is an admissible polyhedron implies that $\Sigma^{(2)}$ cannot be empty, which excludes degenerate examples (e.g., allowing a single point to be considered a spine for $S^3$.)
Condition~(\ref{nonempty}) ensures that no component of $\Sigma$ is a closed surface.

From here, we assume all spines are trivalent spines.
\smallskip
When an admissible polyhedron is a spine for a manifold, we can immediately deduce the following.

\begin{lemma}
If $M$ is a compact, connected, oriented 3-manifold, any (trivalent) spine $\Sigma$ for $M$ is itself compact and connected with a finite number of vertices, and all faces of $\Sigma$ have finite type and non-empty boundary.
\end{lemma}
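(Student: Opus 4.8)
The plan is to verify each clause directly from the definition of a trivalent spine, using only elementary facts about compact polyhedra together with, for the last clause, the standard structure of a spine of a closed $3$-manifold.

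\emph{Compactness, connectedness, and finiteness.} By Definition~\ref{def:subpolyhedron} an admissible polyhedron is a finite CW-complex, so $\Sigma$ is compact; being also tamely embedded in the Hausdorff space $M$, it is a closed subspace. It is connected because $M$ is connected, $Z_\Sigma$ is a finite subset of the $3$-manifold $M$ (so $M - Z_\Sigma$ remains connected), and the deformation retraction $\rho\from(M - Z_\Sigma)\to\Sigma$ is a continuous surjection. For finiteness, cover the compact space $\Sigma$ by finitely many neighbourhoods of the three model types; each meets $\Sigma^{(0)}$ in at most one point, so $\Sigma^{(0)}$ is finite and $\Sigma$ has finitely many vertices. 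Hence the singular set $S := \Sigma^{(0)}\cup\Sigma^{(1)}$ is a finite graph, and $\Sigma^{(2)} = \Sigma - S$ --- the complement of a finite graph in the compact $2$-dimensional polyhedron $\Sigma$ --- has finitely many components, i.e.\ finitely many faces.

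\emph{Faces are of finite type with non-empty boundary.} I would fix a regular neighbourhood $N(S)$ of $S$ in $\Sigma$, so that $\Sigma - \interior N(S)$ is a compact surface with boundary. A face $F$ is a connected component of the open surface $\Sigma^{(2)} = \Sigma - S$, and $F_0 := F\cap(\Sigma - \interior N(S))$ is a compact subsurface of $\Sigma - \interior N(S)$. Since $F\cap N(S)$ is a half-open collar glued to $F_0$ along $\bdy F_0$, collapsing it gives a deformation retraction of $F$ onto $F_0$; hence $\pi_1(F)$ is finitely generated and $F$ has finite type. If, moreover, the boundary of $F$ were empty, then $\overline F = F$, so $F$ would be closed in $\Sigma$; it is also open, being a component of the open set $\Sigma^{(2)}$, so $F = \Sigma$ by connectedness. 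But then $\Sigma$ is a closed surface, forcing $\Sigma^{(1)} = \varnothing$ and contradicting condition (3) of the definition of a trivalent spine.

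\emph{Planarity when $M$ is closed, and the main obstacle.} Assume $M$ is closed. Here the plan is to invoke the standard structure of a spine (see \cite{Matveev}): a regular neighbourhood $N(\Sigma)$ of $\Sigma$ in $M$ satisfies $M - \interior N(\Sigma)\cong\bigsqcup_{z\in Z_\Sigma}\ball{3}$, one ball about each point of $Z_\Sigma$, so $\bdy N(\Sigma)$ is a disjoint union of $|Z_\Sigma|$ two-spheres. Since $M$ is oriented, the part of $N(\Sigma)$ over a face $F$ is a product $F\times[-1,1]$, whose frontier meets $\bdy N(\Sigma)$ in two disjoint copies of a compact core $F_0$ of $F$; each copy is a compact surface embedded in a $2$-sphere, hence of genus $0$, and since $F$ deformation retracts onto $F_0$ the face $F$ is planar. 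The one genuinely non-routine point is this structural input --- equivalently, that $M$ is recovered from a thickening of $\Sigma$ by capping off a single $3$-ball for each point of $Z_\Sigma$. I expect this to be the main thing to pin down, and would do so either by quoting \cite{Matveev} or by arguing from the tracks of the deformation retraction $\rho$: these sweep $M - (Z_\Sigma\cup\Sigma)$ out along arcs running from the points of $Z_\Sigma$ to $\Sigma$, exhibiting $M - \interior N(\Sigma)$ as $\bdy N(\Sigma)\times(0,1]$ whose ends collapse to the points of $Z_\Sigma$.
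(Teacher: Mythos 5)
Your proof is correct and takes essentially the same approach as the paper's: connectedness from surjectivity of $\rho$, finiteness from compactness, and planarity in the closed case from the observation that a compact core of each face embeds in one of the $2$-spheres bounding the complementary balls. You fill in several steps that the paper compresses to ``it follows'' --- notably the compact-core argument for finite type and the clopen argument for non-empty boundary --- but the underlying ideas coincide.
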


\begin{proof}
The existence of the deformation retraction $\rho_\Sigma$ tells us that $\Sigma$ is compact and connected.
It follows that the components of $\Sigma^{(2)}$ all have non-empty boundary and finite genus. Taking an open cover of $\Sigma$ by neighbourhoods of its vertices shows there are a finite number of points in $\Sigma^{(0)}$.
\end{proof}

A notable class of trivalent spines are \emph{special spines}, defined in~\cite{CaslerSpecialSpines}\footnote{Casler originally used the name ``standard spine''.} and studied in~\cite{Matveev90} and~\cite{Matveev}.
Special spines are trivalent spines with the further restrictions that they have at least one vertex, their edges are all one-cells (so there are no closed loops), and their faces are discs.
They are precisely the spines that arise as duals of triangulations.

\begin{example}
    \emph{Bing's house with two rooms}, denoted $B$ and shown in Figure~\ref{fig:S3trivalentspines} (A), is a special spine (and hence a trivalent spine) for $S^3$. Its complement is a single ball, so $Z = Z_B$ consists of a single small open ball.
An example of a trivalent spine that is \emph{not} a special spine is the {book of three pages}, $T \subset S^3$, which is obtained as the union of one great circle and three distinct spherical caps, as shown in Figure~\ref{fig:S3trivalentspines} (B).
    Here $S^3 - T$ has three components, each of which is a 3-ball, so $Z = Z_T$ consists of three small open balls, one in each component of $S^3 - T$.
    The spine $T$ is significantly simpler and easier to visualise than Bing's house.
\end{example}

\begin{figure}[th!]
\centering
	\begin{subfigure}[b]{0.45\textwidth}

\centering
    \includegraphics[width=0.65\textwidth]{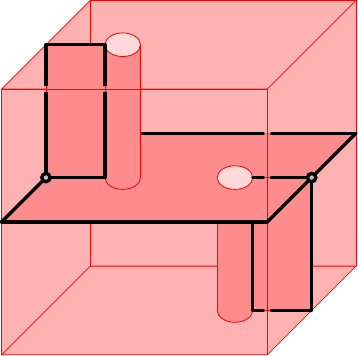}
  \caption{Bing's house with two rooms, a special spine for $S^3$. }
  \end{subfigure}
  \hfill  
  	\begin{subfigure}[b]{0.45\textwidth}
  	\centering
    \includegraphics[width=0.65\textwidth]{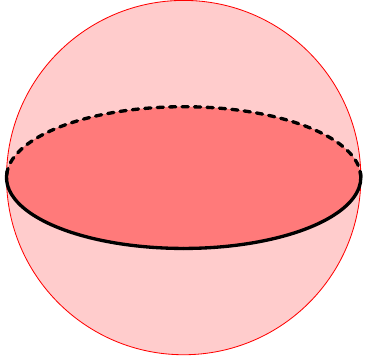}
      \caption{The book of three pages, a trivalent spine for $S^3$.}
  \end{subfigure}
  \caption{Two trivalent spines for $S^3$.}
  \label{fig:S3trivalentspines}
\end{figure}

Fix a trivalent spine $\Sigma$ for $M$. Let $Z = Z_\Sigma$ and fix a deformation retraction $\rho = \rho_\Sigma \from (M - Z) \to \Sigma$.
Given a link $L\subseteq M$,
we define a \emph{diagram} of $L$ with respect to $\Sigma$ by taking the $\rho$-image of $L$ on $\Sigma$:

\begin{definition} A projection of a link $L$ on $\Sigma$ is {\it generic} if $\rho(L)$ is disjoint from the vertices of $\Sigma$ and arcs of $\rho(L)$ intersect each other and the edges of $\Sigma$ only in transverse double points. A {\it diagram} for $L$ is a generic projection with over- and undercrossings recorded at each transverse double point.
\end{definition}

 After fixing $\rho$, we require all diagrams to be generic; as the name suggests, this is easily achieved through a general position argument. We also assume that any isotopies of $L$ are supported in the complement of $Z$.

\section{Moves on projections}

 In order to develop a combinatorial model of link diagrams on spines, we introduce a collection of moves that change the combinatorial type of the projection with respect to the edges of the spine.  In addition to classical Reidemeister moves performed in the interior of faces of $\Sigma$, the moves consist of the Clearing moves, the Finger moves, the Vertex move, and the Exchange move shown in Figure~\ref{fig:pmoves}. Each of the moves corresponds to an isotopy of $L$ in $M$.

\begin{figure}[!ht]
\centering
\includegraphics[width=\textwidth]{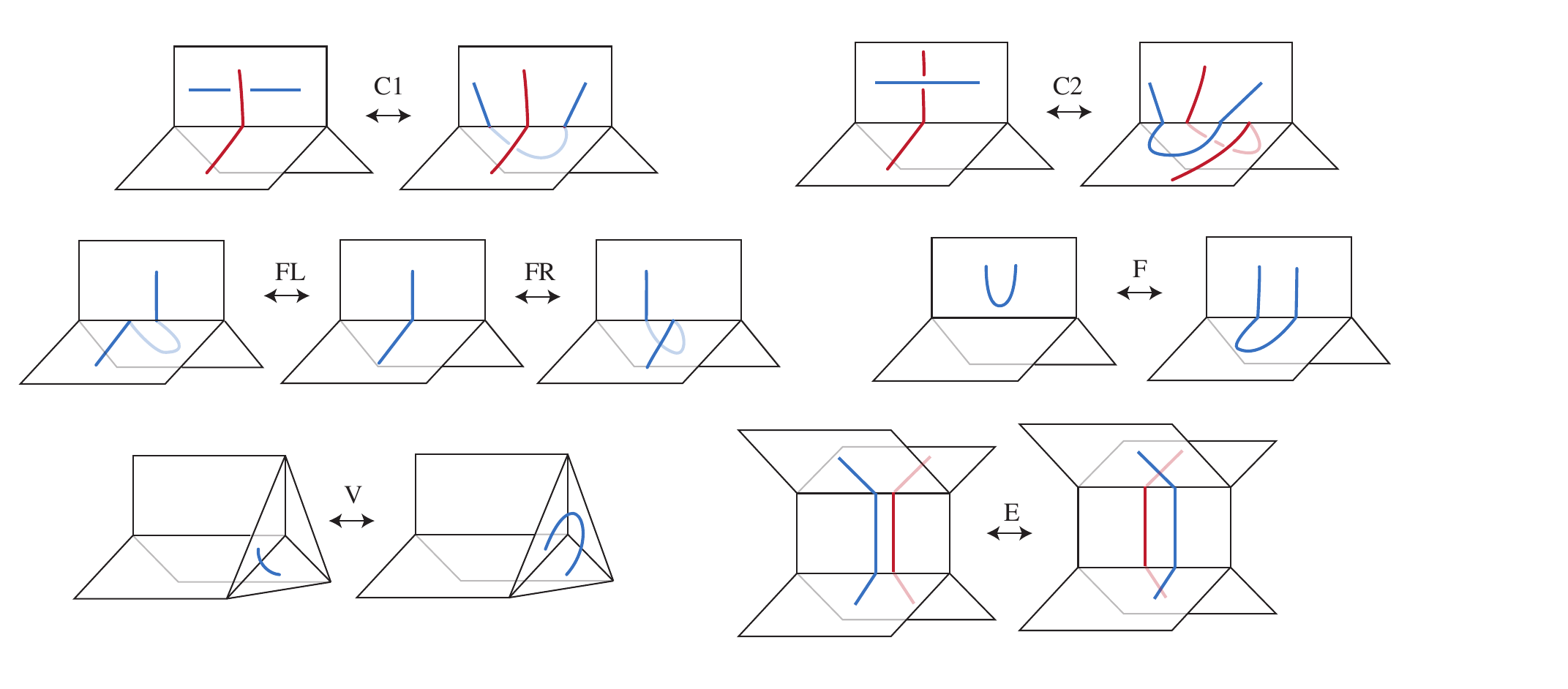}
         \caption{ Top: Clearing moves (C1, C2) where the red/blue crossing is {\em cleared along} the red strand.  Middle: Finger moves (FL, FR, F).  Bottom left: Vertex move (V).  Bottom Right: Exchange move (E).  }\label{fig:pmoves}
\end{figure}

Theorem~\ref{thm:spineReidemeister} below is a Reidemeister Theorem for generic spine diagrams that relates spine projections of isotopic links using these moves and {\em spinal isotopy}:

\begin{definition}
Let $D$ be a generic link diagram for $L$ on a spine $\Sigma$ in $M$.
A \emph{spinal isotopy} of $D$ is an ambient isotopy of $D$ in $\Sigma$.
\end{definition}

Spinal isotopy is analogous to planar isotopy of planar projections of links in $\mathbb{R}^3$. As in that setting, if two diagrams are spinal isotopic, then the corresponding links are ambient isotopic in $M$.

\begin{theorem}\label{thm:spineReidemeister} If $L$ and $L'$ are isotopic in $M$, then their generic spine diagrams differ by a finite sequence of moves -- that is, Clearing, Finger and Vertex moves and Reidemeister moves in the interiors of the faces -- and spinal isotopy.
\end{theorem}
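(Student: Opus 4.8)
\medskip

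The plan is to reduce the statement to a standard general-position argument by tracking a generic isotopy $L_t$ from $L$ to $L'$ and controlling how its $\rho$-image on $\Sigma$ changes. First I would set up the right notion of genericity for a one-parameter family: choosing the ambient isotopy $\{L_t\}_{t\in[0,1]}$ (supported in $M-Z$) to be generic with respect to $\rho$ and to $\Sigma$, so that for all but finitely many $t$ the projection $\rho(L_t)$ is a generic diagram, and at the finitely many exceptional times exactly one "elementary catastrophe" occurs. The catastrophes come in two flavours: those happening in the interior of a single face of $\Sigma$ --- these are precisely the classical Reidemeister moves R1, R2, R3 together with planar isotopy --- and those involving the 1-skeleton $\Sigma^{(1)}\cup\Sigma^{(0)}$, where a strand of $\rho(L_t)$ interacts with an edge or a vertex of $\Sigma$. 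The main work is to enumerate the latter and check each is realised by one of the listed moves (Clearing, Finger, Vertex) up to spinal isotopy.

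\medskip

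For the 1-skeleton catastrophes I would stratify by the dimension of the locus in $\Sigma$ where the non-genericity appears. \emph{(i) A single strand crosses an edge transversally in the generic picture, but at an exceptional time the strand becomes tangent to the edge, or a crossing of $\rho(L_t)$ collides with the edge, or a strand pushes across the edge and reappears on another face incident to it.} The first is a Finger move (FL or FR), the second is a Clearing move (C1 or C2, where the crossing is cleared along the strand lying on the edge), and a strand sliding across an edge between its two (or, at a triple point of $\Sigma^{(1)}$, three) incident faces is again a Finger move F. \emph{(ii) A strand of $\rho(L_t)$ passes through a vertex $v\in\Sigma^{(0)}$.} Since $L_t$ misses $Z$ and is a one-parameter family, this happens at isolated times and the local model is governed by the butterfly neighbourhood of $v$; the before/after projections differ exactly by the Vertex move V. I would verify in each case that the corresponding local isotopy of $L_t$ in $M$ is the one depicted in Figure~\ref{fig:pmoves}, so that conversely each combinatorial move is an admissible isotopy (this direction is immediate from the figure, as already remarked in the text).

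\medskip

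The remaining issue is that a generic isotopy need not stay generic \emph{as a projection} at the start and end --- but $D$ and $D'$ are assumed generic, so this is not a problem --- and, more seriously, that during the isotopy a strand might momentarily run along a \emph{positive-length} arc of an edge, or a crossing might momentarily sit \emph{on} an edge for a positive-length subinterval of time. Both are non-generic one-parameter phenomena and are eliminated by a further small perturbation of $\{L_t\}$; making this perturbation precise (a transversality statement for the map $[0,1]\times L \to M \to \Sigma$ relative to the stratification $\Sigma^{(0)}\subset\Sigma^{(1)}\subset\Sigma$) is the technical heart of the argument. The main obstacle I anticipate is \textbf{the exhaustiveness of the case analysis at the vertices}: a priori, a strand could approach a vertex along any of the six faces of the butterfly and exit along any other, so I would need to argue --- using the precise combinatorics of the butterfly and the fact that the local sheet of $L_t$ projects to an embedded arc near $v$ --- that all of these are spinally isotopic to finitely many normal forms, each handled by V (possibly composed with Finger moves near the edges emanating from $v$). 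Once the local classification is complete, concatenating the local moves along $t\in[0,1]$ gives the finite sequence claimed, and composing with the final spinal isotopy matching the endpoints to $D$ and $D'$ finishes the proof.
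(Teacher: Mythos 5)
Your proposal is a direct transversality argument on the one-parameter family $L_t$, classifying codimension-one catastrophes of the projection $[0,1]\times L\to\Sigma$ against the stratification $\Sigma^{(0)}\subset\Sigma^{(1)}\subset\Sigma$. The paper organizes the same general-position analysis differently: it first projects $L_t$ to the boundary surface $\partial N(\Sigma)$ of a product neighbourhood, invokes the classical Reidemeister theorem for surfaces there, and then tracks what happens to each local event under the retraction $N(\Sigma)\to\Sigma$, splitting into balls centred in a face, on an edge, or at a vertex. The most notable divergence is in the edge case: rather than enumerating tangency, crossing-on-edge, and strand-slide catastrophes by hand as you do, the paper projects a neighbourhood of an edge point to a disc that identifies two of the three incident faces and \emph{treats the edge as a fixed strand}, so the classical R2 and R3 moves against that strand become exactly the Finger and Clearing moves. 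This black-boxes the exhaustiveness question you flag for edges; it also exposes a subtlety you omit, namely that some crossing configurations produce the mirror C2 move, which the paper then derives as a composite of C2, Finger, and Exchange moves. At the vertex, the concern you raise about enumerating approaches through the butterfly is resolved by a simplification you do not state: after a small perturbation one may assume no crossing of the projection ever passes through a vertex and that only a single strand meets the vertex ball at any time, so the strand either stays isotopic within a face or slides over the vertex, giving V directly. Your proposal is on the right track and would likely succeed with substantially more case-by-case work; the paper's decomposition via $\partial N(\Sigma)$ and the edge-as-strand trick buys a shorter, more self-justifying case analysis.
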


\begin{remark}\label{rmk:Exchange move}

The Exchange move shown in Figure~\ref{fig:pmoves} does not appear in the statement of the theorem. In fact, the Exchange move is a composition of a Reidemeister 2 move, two C1 moves, and two Finger moves, as shown in Figure~\ref{fig:exchangeMoveDerivation}. Note also that while other moves are contained in the neighbourhood of a point, the Exchange move is contained in the neighbourhood of an arc. The benefit of the Exchange move emerges in the next section, when we consider crossingless moves between crossingless diagrams.
\end{remark}

\begin{figure}[th!]
\centering
\includegraphics[width=\textwidth]{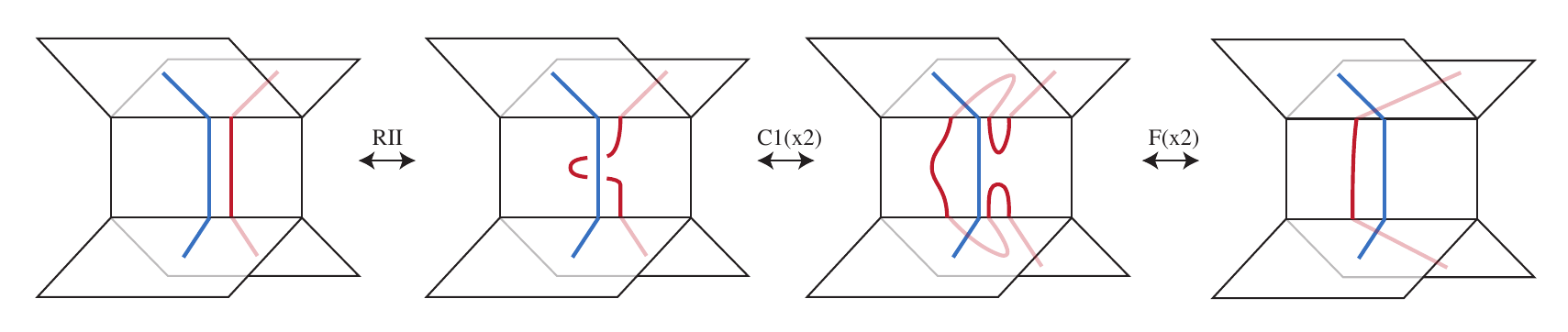}
\caption{The Exchange move follows from the Reidemeister II, Clearing, and Finger moves.}
  \label{fig:exchangeMoveDerivation}
\end{figure}

\begin{proof} Use the product structure on $M\setminus (\Sigma \cup Z)$ coming from Definition~\ref{def:triv} to choose a regular neighbourhood $N(\Sigma)$ of $\Sigma$. Since the boundary of $N(\Sigma)$ is a surface in a 3-manifold, generic isotopies of $L$ in $M$ change the projection of $L$ onto $\partial N(\Sigma)$ by Reidemeister moves.

We examine what happens when $N(\Sigma)$ retracts onto $\Sigma$.  It suffices to consider such a retraction locally in the neighbourhood of a ball centred on $\Sigma$, and we distinguish cases based on whether this centre point is at a vertex, lies on an edge, or lies in a face of $\Sigma$. We further subdivide the isotopy parameter interval so that each time-space ball intersects either at most three strands of $L$ or two strands of $L$ and one edge of $\Sigma$.  We appeal to the classical Reidemeister Theorem to see that this is possible in the first case; the second case is similar, but a segment of an edge replaces a third strand of $L$.

{\em Case 1: A ball centred in a face.} A ball disjoint from $\Sigma^{(1)}$ lives in a product neighbourhood of a disc subset of a face of $\Sigma$, and hence Reidemeister moves on the boundary and isotopies within the interior of the ball both project to Reidemeister moves on the interior of the face.

{\em Case 2: A ball centred on an edge.} In a neighbourhood of a point on $\Sigma^{(1)}$, consider a projection to a disc that identifies two of the faces meeting along the edge.  We treat the edge as a fixed strand and consider isotopies of $L$ inducing Reidemeister II and III moves on this projection.
Figure~\ref{fig:Reidcase2} shows how Reidemeister II moves correspond to Finger moves and Reidemeister III moves correspond to Clearing moves for specific choices of over- and undercrossing data.  The other cases are similar, although sometimes inducing the mirror image of the C2 move (which is shown in Figure~\ref{fig:clearingDirectionChange}). It is straightforward to check that this mirror image is a composite of C2, Finger and Exchange moves.

\begin{figure}[th!]
\centering
\includegraphics[width=\textwidth]{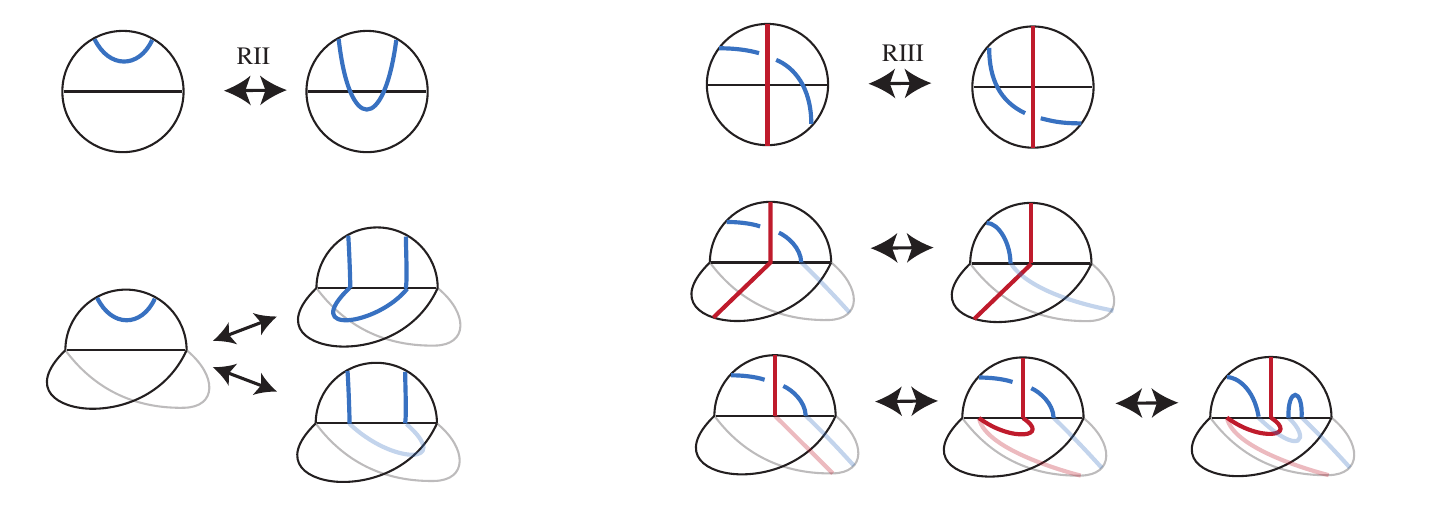}
\caption{In a neighbourhood of a point on $\Sigma^{(1)}$, Reidemeister moves that treat the edge as a strand correspond to compositions of Clearing and Finger moves.}
  \label{fig:Reidcase2}
\end{figure}

\begin{figure}[!ht]
    \centering
    \includegraphics[width=\textwidth]{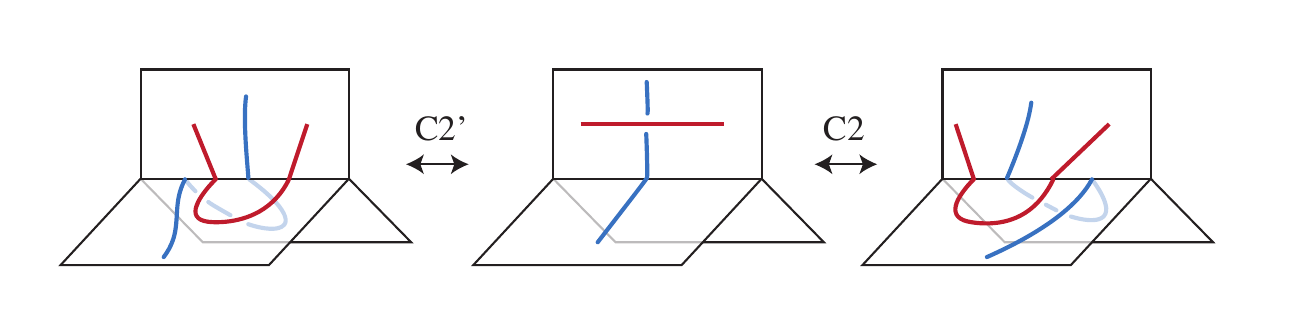}
    \caption{The standard Clearing move from Figure~\ref{fig:pmoves} (C2, right) and the alternative Clearing move (C2', left).}
    \label{fig:clearingDirectionChange}
\end{figure}

{\em Case 3: A ball centred at a vertex.} When the ball is centred at a vertex, after slightly perturbing the isotopy if necessary, we can ensure that no crossing of $L$ ever passes through a vertex, and so can assume that only one strand of the link intersects the ball. Fixing the endpoints of the strand, the strand is either fixed up to isotopy in a face, or moves over a vertex, giving the Vertex move.
\end{proof}

\section{Arc presentations of links}

We call a link diagram on a spine $\Sigma$ an {\it arc diagram} if it has no crossings.  In stark contrast to standard projections to a plane -- where all crossingless diagrams represent unlinked copies of the unknot -- projections to spines can always be made crossingless.

\begin{proposition} Let $M$ be a 3-manifold and $\Sigma$ a trivalent spine for $M$.
Any link in $M$ admits an arc diagram on $\Sigma$.
\end{proposition}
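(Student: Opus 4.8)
The plan is to start with an arbitrary generic diagram $D$ of the link $L$ on $\Sigma$ and remove its crossings one at a time by isotoping $L$ in $M$, using the structure of $\Sigma$ to "push" a strand off through the complementary region near a point of $Z$. First I would fix a generic diagram $D$ of $L$ with some finite number $c$ of crossings, and argue by induction on $c$; the base case $c=0$ is vacuous. For the inductive step, pick any crossing $x$ of $D$; I want to produce an isotopic link whose diagram has fewer crossings, without needing to control whether new crossings appear elsewhere in a delicate way — instead I will re-project after the isotopy and only count carefully enough to see the count drops, or better, I will show that one overstrand arc can be lifted entirely off $\Sigma$ and re-laid in a crossingless way.

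The cleanest route uses the deformation retraction $\rho\from (M-Z)\to\Sigma$ together with condition (3) in the definition of trivalent spine (that $\Sigma^{(1)}$ and $\Sigma^{(2)}$ are non-empty): since $\Sigma^{(2)}\neq\varnothing$, there is at least one face $F$, and on the "other side" of $\Sigma$ from $F$ (locally) the retraction $\rho$ pushes a collar of $F$ onto $F$ from a region whose closure meets a point of $Z$. Concretely, I would choose a small arc $\alpha$ of $L$ near the crossing $x$ that contains the overstrand through $x$, lift $\alpha$ slightly off $\Sigma$ into the product neighbourhood $N(\Sigma)$ (as in the proof of Theorem~\ref{thm:spineReidemeister}), and then isotope it along a path in $M-Z$ that carries it into a single chosen face $F_0$, dragging it "around the back" of $\Sigma$ so that, upon reprojecting, $\alpha$ traverses a controlled route. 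Iterating, I would route all of $L$ so that it meets each edge of $\Sigma$ transversally in some points but the strands, within each face, can be isotoped (rel the edge-crossing points) to be pairwise disjoint — this last step is possible because each face $F$ is a planar surface with non-empty boundary, hence its interior is an open subsurface of $\R^2$, and finitely many properly embedded arcs with disjoint endpoints on $\bdy F$ can be made disjoint in such a surface precisely by standard innermost-arc / bigon arguments, one bigon at a time, and such a bigon removal is exactly a sequence of Reidemeister moves supported in the interior of the face.

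An alternative, perhaps more efficient formulation: reduce a single crossing directly. Near a crossing $x$, the overstrand passes through a face $F$; push the overstrand slightly into $N(\Sigma)$ and slide it across $\bdy F$ into an adjacent face, i.e. perform a Finger move (or its inverse) and, if necessary, Clearing moves, to "walk" the overstrand past the edge; repeating, walk it all the way around $\Sigma$ through the region near the relevant point of $Z$ and back, so that it returns without ever re-crossing the understrand. Because $M-Z$ deformation retracts to $\Sigma$, any loop in the complement of the understrand lies in a region that retracts to a $1$-complex, and a short arc can be homotoped across it; the crossing $x$ is thereby eliminated, at the cost of possibly introducing new intersections of the moved strand with edges of $\Sigma$ but not with $L$ itself. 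Carefully bookkeeping, the number of $L$-$L$ crossings strictly decreases, and induction finishes.

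The main obstacle I expect is making the "push around the back of $\Sigma$" step precise while guaranteeing that no new $L$-$L$ crossings are created — one must check that the arc being moved can be kept disjoint from the rest of $L$ throughout, which requires using that the region of $M-Z$ one is sliding through deformation-retracts onto part of $\Sigma$ and that the rest of $L$, being finite and $1$-dimensional, can be avoided by a generic choice of sliding path (a codimension argument). A secondary technical point is the final in-face disjointification: ensuring that making finitely many arcs in a planar face pairwise disjoint really is achievable rel endpoints — this is true because the endpoints lie on the boundary and the face deformation retracts onto a graph, but it needs the arcs' endpoints on each boundary edge to be in a consistent cyclic order, which one arranges by an initial isotopy. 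Everything else (genericity of the starting diagram, the inductive framing, reprojection) is routine general position.
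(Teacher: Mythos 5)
Your overall strategy — start from a generic projection and eliminate crossings one at a time by isotopy — is the right one, and you even name the right tool (Clearing moves). But both of your routes miss the observation that makes the paper's proof a three-sentence argument: a Clearing move is a \emph{local} operation. After ensuring (via Finger moves) that every component of the projection meets some edge of $\Sigma$, the tangle in each face consists of arcs with endpoints on $\partial F$, so whenever there is a crossing, following a strand from a boundary endpoint gives a first crossing which is boundary-adjacent. After a spinal isotopy shortening the arc from that crossing to the adjacent edge of $\Sigma$, a single Clearing move pushes the strand across the edge into a neighbouring face inside a small disk, destroying exactly one crossing and touching nothing else. Induction finishes. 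There is no need to ``walk the strand all the way around $\Sigma$,'' so the bookkeeping you worry about — whether a long slide through $M-Z$ can be kept disjoint from the rest of $L$ and reprojects with strictly fewer crossings — simply never arises; it is a complication you introduce, not one the problem forces.

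Your first route has a genuine gap: arcs in a planar face with fixed endpoints on $\partial F$ cannot in general be made pairwise disjoint rel endpoints. If two arcs have endpoints interleaved in cyclic order $(a,b,a,b)$ on the boundary, they must cross, and no innermost-bigon argument removes this; there is no bigon. You acknowledge this and appeal to an ``initial isotopy'' rearranging the endpoints' cyclic order, but such an isotopy must slide strands across edges of $\Sigma$, which is precisely the content of the Clearing and Finger moves you are trying to avoid setting up carefully — so that fix is circular. Your second route is closer, but you hand-wave the central claims (that the slide avoids $L$, and that crossings strictly decrease) and omit the one fact that does need an argument: why a boundary-adjacent crossing always exists when crossings remain. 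That is exactly what the paper's preliminary Finger moves buy, and it is the step your proposal should pin down.
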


\begin{proof}
Starting with a link projection, we ensure (using Finger moves if necessary) that each component of the projection intersects some edge of the spine.
Each Clearing move eliminates a crossing from the diagram whilst creating no new crossings. Inductively, every crossing can be removed, yielding an arc diagram.
\end{proof}

\begin{remark}
For the rest of this paper we assume that each component of any link diagram intersects $\Sigma^{(1)}$ at least once. As noted above, this can always be achieved via Finger moves.
\end{remark}

Our main result states that any two arc diagrams of isotopic knots are related by a small set of \emph{arc moves} -- namely Finger, Exchange, and Vertex moves -- which involve no crossings.
In other words, not only do arc diagrams exist for every link, but they -- together with the aforementioned arc moves -- also provide a complete combinatorial characterisation of links in a 3-manifold with a fixed trivalent spine.

\begin{definition}\label{def:arceq}
Two link diagrams (not necessarily arc diagrams) are \textit{arc-equivalent} if they differ by a finite sequence of arc moves and spinal isotopies.
\end{definition}

We can now state our main theorem:

\begin{theorem}\label{thm:amovessuffice} On a fixed spine, any two arc diagrams for a link are arc-equivalent.
\end{theorem}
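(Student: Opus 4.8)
The plan is to bootstrap from the Reidemeister-type result of Theorem~\ref{thm:spineReidemeister}, systematically deleting the crossings that it forces us to pass through, and paying for each Clearing move and each interior Reidemeister move with Exchange and Finger moves. Throughout, write $A \sim A'$ to mean that $A$ and $A'$ are arc-equivalent in the sense of Definition~\ref{def:arceq}.

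First I would set up the reduction. A link is isotopic to itself, so Theorem~\ref{thm:spineReidemeister} produces a finite sequence of diagrams $D = G_0, G_1, \dots, G_N = D'$ in which each step $G_i \to G_{i+1}$ is a single Clearing, Finger, or Vertex move, a Reidemeister move in the interior of a face, or a spinal isotopy. For each $i$ I fix, once and for all, a finite sequence of Clearing moves reducing $G_i$ to an arc diagram $A_i$; such a sequence exists because each component can be arranged to meet $\Sigma^{(1)}$ and each Clearing move strictly lowers the crossing number while creating no new crossings (the proposition on arc presentations). Since $G_0$ and $G_N$ already have no crossings, take $A_0 = D$ and $A_N = D'$. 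Because $\sim$ is transitive, the theorem follows once I show $A_i \sim A_{i+1}$ for every $i$.

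Two lemmas carry the argument. The first is a \emph{confluence} statement: any two sequences of Clearing moves that both reduce a fixed diagram $G$ to arc diagrams yield arc-equivalent outputs. I would prove this by a Newman-style induction on the crossing number of $G$: Clearing moves supported in disjoint regions commute on the nose, while the two genuine sources of ambiguity — clearing a given crossing along its over-strand versus its under-strand, and the order in which two competing crossings are cleared — each alter the resulting arc diagram only by an Exchange move together with Finger moves. This is exactly the identity of Remark~\ref{rmk:Exchange move} (Figure~\ref{fig:exchangeMoveDerivation}) read in reverse, together with the observation already used in the proof of Theorem~\ref{thm:spineReidemeister} that the mirror Clearing move C2$'$ of Figure~\ref{fig:clearingDirectionChange} is a composite of C2, Finger, and Exchange moves. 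The second lemma is a \emph{transport} statement: a Finger move or a Vertex move performed on $G$ can be carried through any fixed clearing of $G$, so that the clearing of the modified diagram differs from $A_i$ only by that (arc) move and finitely many further Finger and Exchange moves. With these, the case analysis is routine. If $G_i \to G_{i+1}$ is a Clearing move, then prepending it to the chosen clearing of $G_{i+1}$ exhibits a clearing of $G_i$, and confluence gives $A_i \sim A_{i+1}$. A spinal isotopy step is transported through the clearings to a spinal isotopy relating $A_i$ and $A_{i+1}$, and a Finger or Vertex step is handled by the transport lemma. For interior Reidemeister moves: an R3 move merely repositions a fixed set of crossings inside a face, so clearing and confluence again give $A_i \sim A_{i+1}$; an R1 move creates or destroys a single kink, and clearing that kink against a nearby edge returns the strand to its original face up to a Finger move; an R2 move creates or destroys a bigon of two crossings, and clearing those two crossings realises precisely the composite of Remark~\ref{rmk:Exchange move}, so $A_i$ and $A_{i+1}$ differ by an Exchange move together with Finger moves.

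The main obstacle is the confluence lemma, and with it the book-keeping inside the transport lemma: one must check that the choices made while clearing — which crossing to attack first, and along which of its two strands — never change the result beyond Exchange and Finger moves, and, more delicately, that routing a strand toward an edge so as to make a crossing clearable can always be arranged without spawning crossings beyond those already accounted for. The R2 analysis is where the Exchange move is genuinely indispensable, which is why it appears in the statement of Theorem~\ref{thm:amovessuffice} even though it is derivable (Remark~\ref{rmk:Exchange move}) from the moves of Theorem~\ref{thm:spineReidemeister}; everything else reduces to the classical Reidemeister calculus applied inside faces and in edge-neighbourhoods.
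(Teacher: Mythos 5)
Your high-level reduction matches the paper's proof exactly: invoke Theorem~\ref{thm:spineReidemeister} to get a chain of generic diagrams, clear each one, and compare adjacent clearings case by case. Your ``confluence'' claim is precisely the paper's Proposition~\ref{prop:choiceofforest}, and your ``transport'' claim for Finger and Vertex moves is handled in the final bullet points of the paper's proof, so you have correctly located the crux.

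The gap is that the confluence lemma is left essentially open, and it carries most of the weight of the theorem. The ``Newman-style induction'' sketch, resolving each critical pair by ``an Exchange move together with Finger moves,'' drastically understates what is needed. When one changes which outgoing edge a crossing is cleared along, or the order in which two competing crossings are cleared, the effect is not local: the entire cleared sub-tangle downstream of that choice ends up on the other side of a boundary arc, and moving it across requires carrying a whole cleared subtree past an edge of $\Sigma$. The paper formalises this with clearing forests (Definition~\ref{def:ClearingForest}) and then proves, each by its own induction on subtree size, a Bulk Finger lemma, the Finger Change Lemma~\ref{lem:fingerchange}, the Reordering Lemma~\ref{lem:Order}, and the Bulk Clearing Lemma~\ref{lem:bcl}; only after all four is the confluence statement within reach. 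The Exchange move is already indispensable inside the Bulk Finger induction, not only in the R2 case as you suggest. Your treatment of Reidemeister~III is also off as stated: an R3 move relates two different diagrams, so confluence (a statement about two clearings of the same diagram) does not apply; one must choose common clearing forests avoiding the three R3 crossings and verify directly that the residual clearings of the two sides are arc-equivalent, as in Proposition~\ref{prop:reidarcs} and Figure~\ref{fig:RIII}. You flag the confluence lemma as ``the main obstacle'' -- rightly so -- but as written the proposal does not supply it.
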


The rest of this section is dedicated to the proof of this theorem.

\begin{question}
    Henry Segerman suggested that the result might hold under a restricted definition of arc-equivalence that excludes the Exchange move.  We would be interested to know if this is true.
\end{question}

\subsection{Clearing forests}\label{sec:forest}
As a first step, we introduce the terminology of {\em clearing forests} to describe the process of turning a link diagram into an arc diagram.
Let $F$ be a face of the spine, and let $T$ denote the restriction of a generic diagram of the link $L$ to $F$.  To each pair $(T, F)$, we associate the embedded graph $\Gamma(T,F)$ in $F$ 
 which has a four-valent vertex at each crossing of $T$ and a univalent vertex at each intersection of $T$ with the boundary of $F$.  Each edge incident to a univalent vertex is said to be {\em boundary-adjacent}.

\begin{definition}\label{def:monotonicClearing}
A {\it monotonic clearing} of a face $F$ is a sequence of Clearing moves that removes all of the crossings of the tangle diagram $T$ on $F$.
A monotonic clearing for $L$ is a sequence of Clearing moves that removes all crossings of $L$.
\end{definition}

Next, we show that monotonic clearings are in bijection with certain collections of forest subgraphs of the face graphs $\Gamma(T,F)$.

\begin{definition}\label{def:ClearingForest}
Let $S$ be a subgraph of $\Gamma(T,F)$, such that $S$ is a forest.
We call $S$, along with a total ordering $<$ on its edges, a {\em clearing forest} for $F$ if it satisfies the following conditions:
\begin{enumerate}
    \item $S$ spans all vertices of $\Gamma(T,F)$.
    \item\label{itm:root} Each connected component of $S$ is a tree where exactly one leaf is a univalent 
    vertex of $\Gamma(T,F)$; we call this distinguished vertex the {\em root}.
    \item\label{itm:ordering} If $e$ and $f$ are two edges in the same connected component of $S$ and the path from $e$ to the root goes through $f$, then $f<e$; we call such a total order {\em admissible}.
\end{enumerate}
It follows from Property~\ref{itm:root} that each connected component of $S$ has a unique orientation towards the root. With this orientation, each four-valent vertex of $T$ is incident to a unique outgoing edge of $S$; we call this the {\em clearing direction} for the vertex.

\begin{figure}[th!]
\centering
\includegraphics[width=\textwidth]{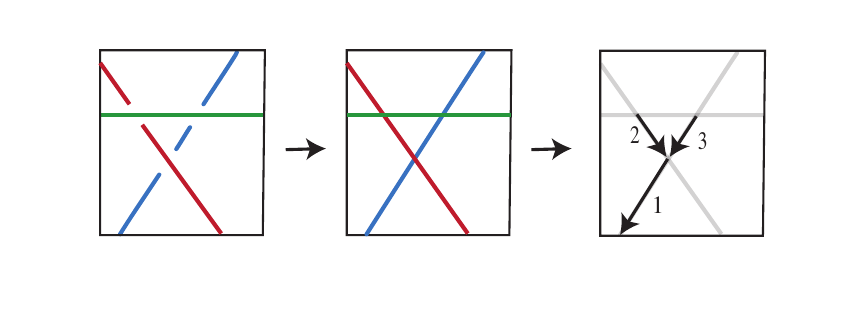}
\caption{A clearing forest associated to a face.  Left: the link diagram $T$ on a face $F$ of $\Sigma$.  Centre: the associated graph $\Gamma(T, F)$. Right: a clearing forest for $F$, with the total order on the bold edges indicated by the integer labels.}
  \label{fig:Forestex}
\end{figure}

Denote the faces of $\Sigma$ by $F_1,\ldots,F_m$.
A {\it clearing forest} for a generic link diagram is a collection $\mathbb{S}=\cup S_i$ where $S_i$ is a clearing forest for $F_i$ and there is a total ordering $<$ on the edges of $\mathbb{S}$ that induces the admissible ordering on each $S_i$. Edges of $\mathbb{S}$ are oriented by their orientations towards the face boundary as edges of $S_i$.
\end{definition}

\begin{remark}\label{rmk:ConnCpt}
A connected component of a clearing forest $\mathbb{S}$ may be either
\begin{enumerate}
\item\label{itm:singletree} a single tree component of a clearing forest $S_i$ on a face; or
\item\label{itm:shareroot} a union of two tree components on adjacent (but not necessarily distinct) faces which share a root.

\end{enumerate}
\end{remark}

\begin{lemma}[Clearing Forest Lemma] \label{lem:ClearFor} Monotonic clearings of a face $F$ are in bijection with clearing forests on $F$. Similarly, monotonic clearings of $L$ are in bijection with clearing forests for $L$.
\end{lemma}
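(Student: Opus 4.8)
The plan is to prove the first statement (the bijection for a single face $F$) in detail, and then explain how the face-level statement assembles into the statement for $L$; I expect the assembly step to be where the real care is needed, because of the phenomenon described in Remark~\ref{rmk:ConnCpt}~\eqref{itm:shareroot}, where tree components on two adjacent faces are glued along a shared root on an edge of $\Sigma$.

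\textbf{The single-face bijection.} Given a monotonic clearing of $F$ — a sequence of Clearing moves $c_1, \dots, c_k$ removing all $k$ crossings of $T$ — I would extract a clearing forest as follows. Each Clearing move $c_j$ clears a crossing $v_j$ \emph{along} one of the four strands emanating from $v_j$; record the edge $e_j$ of $\Gamma(T,F)$ that this strand begins with (the edge incident to $v_j$ in the clearing direction), and order the $e_j$ by $e_1 < e_2 < \dots < e_k$. The key observation is that a Clearing move pushes the crossing $v_j$ along a strand until it either hits $\bdy F$ or merges with a region already cleared; tracking where successive crossings get pushed shows the recorded edges, with their clearing directions, form a subforest $S$ in which the orientation "follow the clearing direction" always flows toward $\bdy F$, giving Property~\eqref{itm:root} (each component has a unique boundary-adjacent root). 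Since a crossing can only be cleared after the crossings "downstream" of it (closer to the root) have been cleared — otherwise the move would create a new crossing, contradicting monotonicity — the resulting order is admissible, giving Property~\eqref{itm:ordering}. Since every crossing is cleared and every $v_j$ is a vertex of $\Gamma(T,F)$, together with the univalent roots we span all vertices: Property (1). Conversely, given a clearing forest $(S,<)$, I would read off a monotonic clearing by processing the edges of $S$ in decreasing order: the largest edge $e$ is boundary-adjacent at a leaf (by admissibility, it is the edge furthest from its root), and if it is incident to a four-valent vertex $v$ at its other end, then $v$ has been made a "pendant" crossing by the earlier-in-sequence/later-in-order moves, so a single Clearing move along $e$ removes $v$ without creating new crossings; induct. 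The two constructions are mutually inverse essentially by construction, since the edge recorded for the move clearing $v_j$ is precisely the outgoing edge of $S$ at $v_j$.

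\textbf{The main obstacle: assembling faces.} The subtle point is that Clearing moves live on the spine $\Sigma$, not on a single face: a strand of $L$ crossing an edge of $\Sigma$ passes from one face into an adjacent one, and a Clearing move may push a crossing across that edge. The definition of a clearing forest for $L$ handles exactly this: a component of $\mathbb S$ may be a union of two face-level trees sharing a root on $\bdy F_i \cap \bdy F_j$, and the global order $<$ must simultaneously restrict to an admissible order on each face. I would argue that a monotonic clearing of $L$, restricted to each face $F_i$, gives a sequence of Clearing moves on $F_i$ — but one must check that this restriction is still \emph{monotonic on $F_i$}, i.e. that no move on $F_i$ creates a crossing within $F_i$ (a crossing pushed out through $\bdy F_i$ into $F_j$ is fine, and becomes the shared root). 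This is where I would spend the most effort: verifying that the global monotonicity hypothesis forces each face-restriction to be monotonic, and conversely that a compatible family of face-level clearing forests with a common admissible order can be realised by an honest global sequence of Clearing moves on $\Sigma$ (the order $<$ tells us exactly which move to do next, and the only new subtlety is that a move clearing a vertex on $F_i$ along its root-edge may deposit that crossing onto $F_j$, where it becomes the root of $F_j$'s tree and is cleared later, in the correct order). Once this compatibility is in hand, the global bijection follows by bundling together the face-level bijections.

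\textbf{Remaining bookkeeping.} After the two main constructions, I would check: (i) well-definedness — different but "equivalent" monotonic clearings (e.g. swapping the order of two moves on disjoint parts of the diagram) map to the same clearing forest, and the order $<$ is only required to be \emph{some} admissible order, so this is a non-issue once we quotient appropriately, or alternatively one simply notes the bijection is between ordered clearings and clearing forests-with-order; (ii) that the orientation-towards-root is genuinely well-defined on $\mathbb S$, which is immediate from Property~\eqref{itm:root} applied componentwise and the observation in Remark~\ref{rmk:ConnCpt}; and (iii) that "each four-valent vertex is incident to a unique outgoing edge of $S$" — this is a small combinatorial fact: a four-valent vertex of $\Gamma(T,F)$ lies in the interior of $S$ (it is spanned and is not a leaf unless $\Gamma$ is degenerate), and in a tree oriented toward a root, every non-root vertex has exactly one outgoing edge. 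None of these should present difficulties; the content is entirely in the face-assembly step.
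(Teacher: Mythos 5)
Your converse construction (clearing forest $\to$ monotonic clearing) has the ordering reversed, and this is a genuine error rather than a notational quibble. You propose to process the edges of $S$ in \emph{decreasing} order, claiming that the largest edge $e$ is boundary-adjacent. But by admissibility the largest edge in a component is the one \emph{farthest} from the root, and the root is the unique univalent vertex of $\Gamma(T,F)$ in that component; hence the largest edge is incident to two four-valent vertices (or a leaf crossing of the tree and an interior crossing) and is in general \emph{not} boundary-adjacent, so no Clearing move can be performed along it. For a concrete failure, take a tree with vertices $\text{root} - e_1 - v_1 - e_2 - v_2$ where $e_1 < e_2$: your procedure would attempt to clear $v_2$ along $e_2$, but the strand carrying $e_2$ hits the crossing $v_1$ before reaching $\bdy F$, so the Clearing move is not available. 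The correct procedure (and what the paper does) is to process in \emph{increasing} order: the first edge is boundary-adjacent precisely because it is minimal in the admissible order, so it is incident to the univalent root; after clearing that crossing, the cleared vertex becomes univalent in the new graph $\Gamma(T',F)$, making the next-smallest edge boundary-adjacent, and the induction proceeds. Your parenthetical ``it is the edge furthest from its root'' is correct but is exactly what makes it the \emph{wrong} edge to start with. Relatedly, the intuition in your forward direction that ``a Clearing move pushes the crossing $v_j$ along a strand until it hits $\bdy F$'' misdescribes the move: a Clearing move removes a single crossing in one step, precisely because the chosen strand already reaches $\bdy F$ without encountering another crossing; nothing is pushed along. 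This confusion is probably the source of the reversed ordering.

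A secondary point: you flag the face-assembly step as where ``the real care is needed,'' but this is not where the paper (or the lemma) places the weight. A Clearing move is by definition supported in a neighbourhood of a point of $\Sigma^{(1)}$ and removes exactly one crossing while creating none, so restricting a global monotonic clearing to the subsequence of moves whose crossing lies in $F_i$ automatically yields a monotonic clearing of $F_i$, and conversely a family of face-level forests with a compatible global admissible order assembles directly. The paper's treatment is a one-sentence observation, and rightly so. The genuine content — matching boundary-adjacency in the evolving graphs $\Gamma(T', F)$ against minimality in the admissible order — lives in the single-face argument, which is exactly where your proposal goes wrong.
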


\begin{proof}
We first prove the statement for a fixed face $F$ of the spine.

Given a monotonic clearing, we recursively construct a clearing forest $S$ as follows. Consider the first Clearing move in the monotonic clearing. The first Clearing move is performed along a strand (shown in red in Figure~\ref{fig:pmoves}) that either remains fixed (C1) or changes only by a Finger move supported in an arbitrarily small neighborhood of the point where it intersects $\Sigma^1$ (C2). This strand corresponds to a boundary-adjacent edge of $\Gamma(T,F)$; include this edge in the forest $S$, oriented towards the boundary.

Let $T'$ denote the tangle diagram on $F$ after the first Clearing move is performed. The graph $\Gamma(T',F)$ has one less vertex than $\Gamma(T,F)$, but there is a natural identification of the uncleared edges. Consider the edge of $\Gamma(T',F)$ along which the next Clearing move is performed, and include the corresponding edge of $\Gamma(T,F)$ in $S$, oriented towards the boundary. Continue this process until all crossings on $F$ are eliminated.

Since a monotonic clearing removes every crossing, each vertex of $\Gamma(T,F)$ is ultimately spanned by $S$ and incident to the single outgoing edge along which it is cleared. Following the clearing directions determines a unique, finite-length path from each vertex to a univalent vertex on the boundary of $F$, and all boundary-adjacent edges of $S$ are oriented towards the boundary. It follows that each connected component of $S$ is a tree with a unique boundary-incident leaf, as required. The ordering of the edges is given by the order in which the Clearing moves occur, which satisfies Condition~\ref{itm:ordering} of Definition~\ref{def:ClearingForest}.

On the other hand, each clearing forest $S$ for $F$ determines a monotonic clearing, as follows. Let $e$ be the first edge in the clearing forest, according to the ordering.
By Condition~\ref{itm:ordering}, the edge $e$ is boundary-adjacent. Let $v$ denote the four-valent vertex incident to $e$. Clear the crossing corresponding to $v$ by a Clearing move along $e$, and denote the resulting tangle diagram by $T'$.  Any edge that terminated at $v$ in $S$ naturally corresponds to an edge that terminates at a univalent vertex in $\Gamma(T',F)$; this induces an identification of the edges of $\Gamma(T,F)$ other than $e$ with the edges of $\Gamma(T',F)$. Now perform a Clearing move along the edge of $\Gamma(T',F)$ second in the total ordering of edges in $S$. Inductively, this process removes all the crossings of $T$ in the order given by the edge ordering of $S$.
See Figure~\ref{fig:clearingForestProcedure} for an example of the first step of the procedure.

To prove the statement for the entire link, note that a clearing forest for $L$ is precisely a forest with ordered edges that restricts to a clearing forest $S_i$ for each of the faces $F_i$ of $\Sigma$. Therefore, the same recursive algorithm works to construct a clearing forest for $L$ from a monotonic clearing, and vice versa.
\end{proof}

\begin{figure}[th!]
\centering
\includegraphics[width=\textwidth]{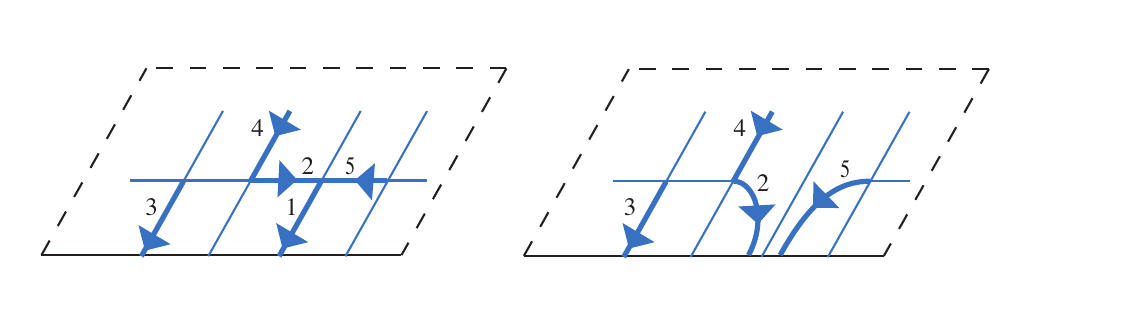}
\caption{Left: The oriented bold edges are part of the clearing forest $S$. Right: The first Clearing move induces a new clearing forest. }
  \label{fig:clearingForestProcedure}
\end{figure}

\subsection{Clearing lemmas}\label{subsec:lemmas}

This section contains a sequence of results addressing the process of turning a generic diagram for the link $L$ into an arc diagram. Many of these statements have a similar form: when clearing a generic diagram via two processes that differ in a specific way, the resulting arc diagrams are arc-equivalent.

The following two lemmas -- the Bulk Finger move and the Finger Change move -- turn out to be very useful tools. They allow us to perform finger moves at the base of trees in clearing forests, depending on which case of Remark~\ref{rmk:ConnCpt} applies.

\begin{lemma}[Bulk Finger]
Let $R$ be a connected component of $\mathbb{S}\cap F_i$
that falls under Case~\ref{itm:singletree} of Remark~\ref{rmk:ConnCpt}.
Let $A_1$ be the diagram resulting from clearing $L$ according to $\mathbb{S}$.
Let $A_2$ be the diagram resulting from clearing $L$ according to $\mathbb{S}$, but after first performing a Finger move at the root of $R$, as illustrated in Figure~\ref{fig:bulkFingerMove}.
Then $A_1$ and $A_2$ are arc-equivalent.
\end{lemma}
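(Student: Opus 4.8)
The plan is to show that the two monotonic clearings — one following $\mathbb{S}$ directly, and one following $\mathbb{S}$ after a preliminary Finger move at the root of $R$ — produce arc-equivalent results, by tracking the effect of that preliminary Finger move through the entire clearing procedure. The key observation is that a Finger move at the root of $R$ is a Finger move along the boundary-adjacent strand corresponding to the root edge, pushing a finger of that strand across an adjacent edge of $\Sigma$. Since $R$ falls under Case~\ref{itm:singletree} of Remark~\ref{rmk:ConnCpt}, this finger enters a single face on the far side (or comes back into $F_i$), and — crucially — this new finger can be taken to be arbitrarily thin and to lie in a collar of the edge, so it is disjoint from all crossings of the diagram and from all the other strands involved in the clearing moves dictated by $\mathbb{S}$.

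First I would set up the bookkeeping: list the Clearing moves $c_1, c_2, \ldots, c_N$ comprising the monotonic clearing determined by $\mathbb{S}$ via Lemma~\ref{lem:ClearFor}, performed in the admissible order. For $A_1$ we perform $c_1, \ldots, c_N$. For $A_2$ we first perform the Finger move $\phi$ at the root of $R$, then perform the "same" sequence $c_1', \ldots, c_N'$, where each $c_j'$ is the Clearing move along the edge of the (now slightly perturbed) diagram corresponding to the same edge of $\Gamma(T, F)$ — this makes sense because $\phi$ is supported away from all crossings, so it induces a canonical identification of the face graphs before and after, and in particular does not change which edges are boundary-adjacent or the combinatorial type of any crossing. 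Since each Clearing move is a local move supported near a single crossing and the finger of $\phi$ is disjoint from all of these neighbourhoods, the sequence $c_1', \ldots, c_N'$ is literally the sequence $c_1, \ldots, c_N$ performed in the complement of that thin finger. Hence $A_2$ is obtained from $A_1$ by the single Finger move $\phi$, applied to a strand of the arc diagram $A_1$ (the strand that carried the root edge of $R$, which survives to $A_1$ since clearing is along it). A single Finger move is an arc move, so $A_1$ and $A_2$ are arc-equivalent.

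The main obstacle — and the place requiring the most care — is justifying that the preliminary Finger move genuinely commutes past the whole clearing sequence, i.e.\ that one can choose the finger thin enough and positioned so that it never interferes with any subsequent Clearing move, including the Finger-move components hidden inside C2 moves (recall from the proof of Lemma~\ref{lem:ClearFor} that a C2 move perturbs the red strand by a small Finger move near $\Sigma^{(1)}$). One must check that these auxiliary perturbations can also be kept disjoint from the finger of $\phi$; this is where Case~\ref{itm:singletree} is used, since it guarantees $R$ has a single root and the finger escapes into one well-defined region, rather than the finger's target region being involved in some other tree of $\mathbb{S}$ that shares the root (the Case~\ref{itm:shareroot} situation, which is handled separately by the Finger Change move). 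I would make this precise by noting that Clearing moves and their auxiliary perturbations are each supported in an arbitrarily small neighbourhood of a point or short arc, take a finite collection of such neighbourhoods, and then choose the finger of $\phi$ to avoid all of them — a routine general-position argument given the setup, but one worth spelling out. The conclusion then follows, and I would point the reader to Figure~\ref{fig:bulkFingerMove} for the schematic of the single Finger move relating $A_1$ and $A_2$.
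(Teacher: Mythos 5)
Your claim that the preliminary Finger move $\phi$ commutes with the entire clearing sequence is where the argument breaks down. You argue the finger can be made thin enough to be ``disjoint from all crossings of the diagram and from all the other strands involved in the clearing moves,'' but this is false precisely because of where the clearing happens. Every Clearing move in the monotonic clearing of $R$ pushes a strand across $\Sigma^{(1)}$ at the root $r$ of $R$ --- that is exactly what ``clearing towards the root'' means, and after all the moves are done, the entire bundle of cleared strands runs parallel to the root strand and crosses $\Sigma^{(1)}$ in a tight cluster at $r$. The finger of $\phi$ lives in a neighbourhood of $r$ too, so the supports of the Clearing moves and the support of $\phi$ necessarily overlap, and the moves do not commute. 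Concretely: in $A_2$, because $\phi$ is performed first, the Clearing moves push every cleared strand along the \emph{perturbed} root strand, so the whole bundle follows the finger of $\phi$ into the new face; in $\phi(A_1)$, the bundle is already sitting at $r$ before $\phi$ is applied, and a single Finger move on the root strand cannot push the entire bundle --- indeed, moving only the root strand through the bundle would generically create crossings and so is not even a valid arc move. So $A_2 \neq \phi(A_1)$ and the one-move reduction fails.

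This is exactly why the lemma has content: $A_1$ and $A_2$ differ by a \emph{bulk} Finger move --- pushing the whole cleared bundle across a spine edge --- and the point of the proof is to decompose that bulk operation into genuine arc moves. The paper does this by induction on the number of edges of $R$: after the first Clearing move, $R$ splits into three smaller trees, the inductive hypothesis lets one pull the finger past each sub-bundle individually, and Exchange moves are then needed to reorder the resulting arcs along $\Sigma^{(1)}$ (see Figure~\ref{fig:BFMproof}). The presence of Exchange moves is essential and is the invisible obstruction your commutation argument sweeps under the rug. Your write-up also omits any verification for the C2 case, which the paper must treat separately because the C2 move itself contains a small Finger perturbation near the root.
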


\begin{figure}[!ht]
    \centering
    \includegraphics[scale=0.8]{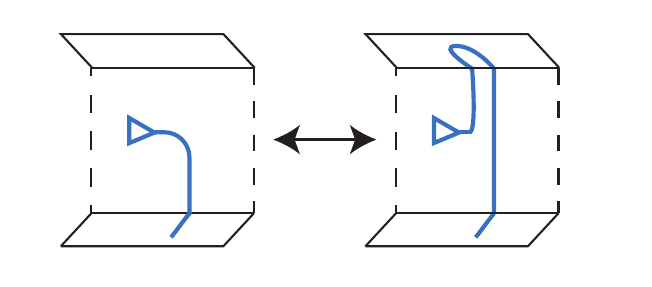}
    \caption{The Bulk Finger move. The tree $R$ is represented by the triangle. The diagram $A_1$ is produced by clearing $R$, as on the left. The diagram $A_2$ is given by clearing the tree after the Finger move shown on the right.
    }
    \label{fig:bulkFingerMove}
\end{figure}

\begin{proof}
First note that $A_1$ and $A_2$ are identical outside of a small neighbourhood of $R$ where the Finger move is performed (i.e., the right hand side of Figure~\ref{fig:bulkFingerMove}); this is a consequence of the fact that $R$ is a connected component of $\mathbb{S}$. Hence, as long as all arc moves that relate $A_1$ and $A_2$ are performed locally in this neighbourhood, we may assume without loss of generality that $R$ is the only component of the clearing forest $\mathbb{S}$.

We proceed by induction on the number of edges in $R$.
If $R$ has zero edges, then the Bulk Finger move is just a Finger move.

Assume the lemma holds whenever $R$ has at most $k$ edges,
and consider the case where $R$ has $k+1$ edges.  Consider the two clearing trees on each side of Figure~\ref{fig:bulkFingerMove}.  In each monotonic clearing, the first Clearing move splits the tree into three components. Note that each of these components has at most $k$ edges, so the inductive hypothesis applies.  Figure~\ref{fig:BFMproof} shows a sequence of Bulk Finger, Exchange, and Finger moves can be applied to relate these two diagrams. This establishes the inductive step, and hence, the lemma.

Note that Figure~\ref{fig:BFMproof} distinguishes two cases, based on whether the first Clearing move is of type C1 or C2.
\end{proof}

\begin{figure}[!ht]
    \centering
    \includegraphics[width=\textwidth]{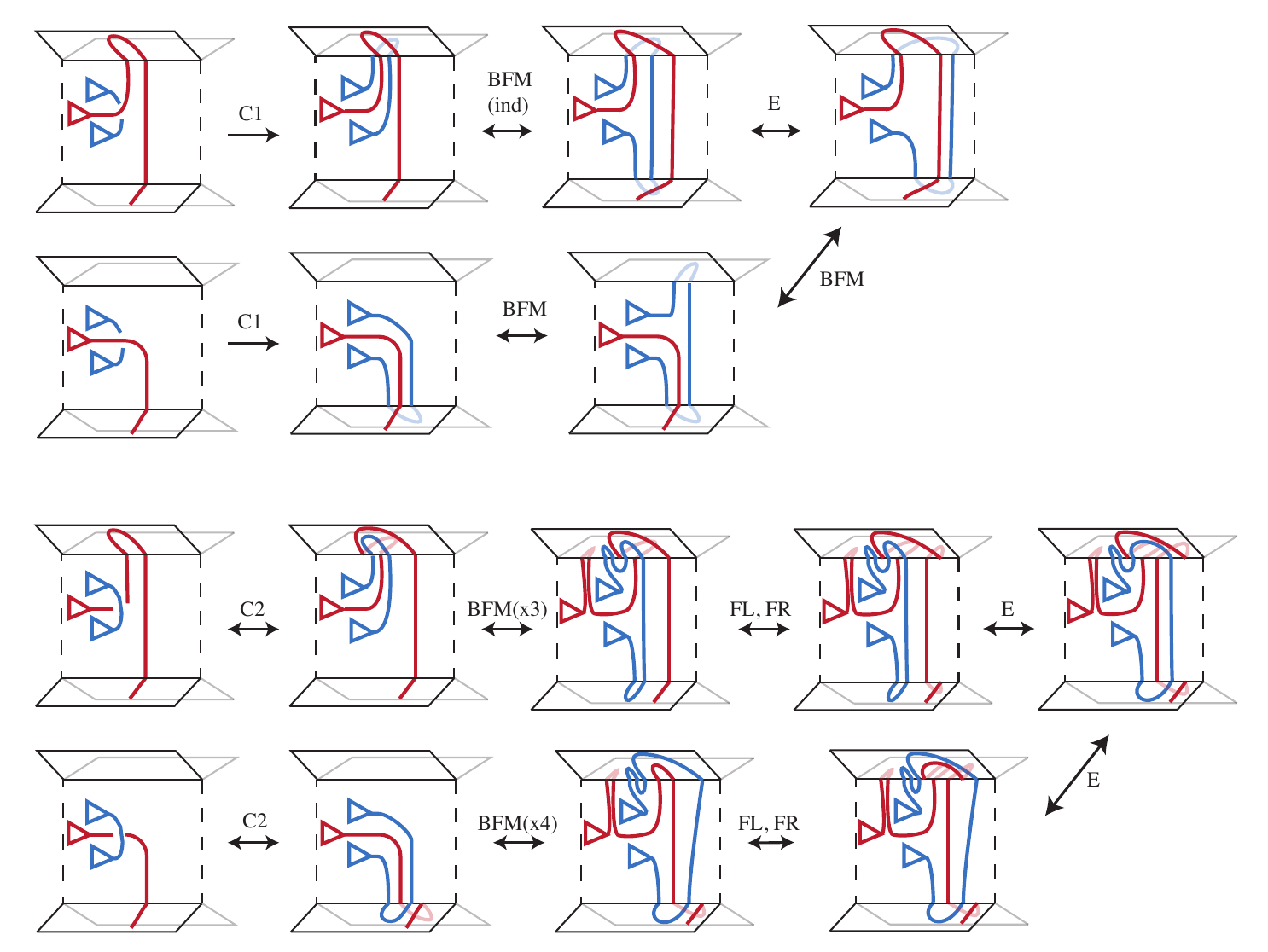}
    \caption{Proof of the Bulk Finger move (BFM).}
    \label{fig:BFMproof}
\end{figure}

The Bulk Finger move is the workhorse of this section.  Heuristically, Finger moves at the root of a clearing tree allow us to isolate individual arc moves from the clearing process.

\begin{lemma}[Finger Change]
\label{lem:fingerchange}
Let $R_1$ and $R_2$ be (possibly empty) clearing trees in a clearing forest $\mathbb{S}$ for $L$ that share a common root, as shown in Figure~\ref{fig:newFingerChange}. (This is Case~\ref{itm:shareroot} of Remark~\ref{rmk:ConnCpt}.) Let $n_1$ and $n_2$ denote the number of edges of $R_1$ and $R_2$, respectively.

Let $A$ be the diagram obtained by clearing $L$ according to the clearing forest $\mathbb{S}$.
Let $B_L$ and $B_R$ denote the diagrams obtained by clearing $L$ according to a clearing forest obtained from $\mathbb{S}$ by performing either a left (FL) or right (FR) Finger move, respectively, at the root of $R_1$ and $R_2$, as shown in Figure~\ref{fig:newFingerChange}.
Then $B_L$ and $B_R$ are each arc-equivalent to $A$.

\begin{figure}[!ht]
    \centering
 \includegraphics[scale=0.7]{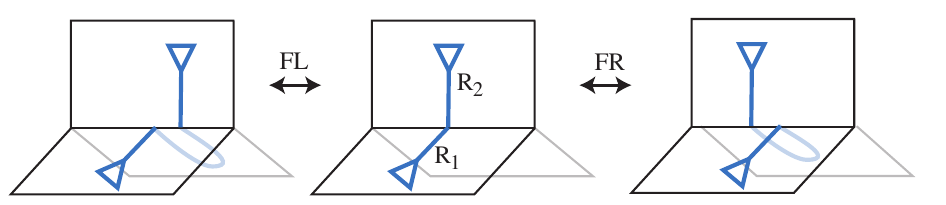}
    \caption{The Finger Change. The triangles represent parts of the trees $R_1$ and $R_2$ not shown. }
    \label{fig:newFingerChange}
\end{figure}
\end{lemma}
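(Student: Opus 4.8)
The strategy is to mirror the proof of the Bulk Finger move: reduce to a local computation by using the fact that $R_1 \cup R_2$ is a single connected component of $\mathbb{S}$ (so outside a neighbourhood of the shared root all three diagrams $A$, $B_L$, $B_R$ agree), and then induct on $n_1 + n_2$. First I would record that it suffices to prove $B_L$ and $B_R$ are each arc-equivalent to $A$, and that by symmetry (reflecting the picture) the statements for $B_L$ and $B_R$ are interchangeable, so I can focus on, say, $B_L \simeq A$; the case $B_R \simeq A$ is identical up to mirroring. The base case is $n_1 = n_2 = 0$: here both $R_1$ and $R_2$ are single boundary-adjacent edges meeting at the root, and clearing according to $\mathbb{S}$ versus clearing after the FL Finger move differ by a single Finger move (or a short composition of Finger and Exchange moves) localised at the root — this is essentially the definition of the FL/FR moves together with how a Clearing move along a root edge looks in a neighbourhood of $\Sigma^{(1)}$.

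For the inductive step, assume the result whenever $R_1$ and $R_2$ together have at most $k$ edges, and suppose $n_1 + n_2 = k+1$. Without loss of generality $n_1 \geq 1$, so $R_1$ has a first edge $e$ in the total order (which, being closest to the root, is incident to the root). Performing the Clearing move along $e$ first splits $R_1$ into the stub $e$ together with the (at most two) subtrees of $R_1$ hanging off the far endpoint of $e$; these subtrees, together with $R_2$, now form clearing data with strictly fewer than $k+1$ edges in the relevant components, so the inductive hypothesis applies to them. The plan is then to exhibit — as in Figure~\ref{fig:BFMproof} for the Bulk Finger move — an explicit sequence of Bulk Finger, Finger, and Exchange moves that converts the "clear $e$, then recurse on $\mathbb{S}$" diagram into the "FL Finger move at the root, then clear $e$, then recurse" diagram: the Bulk Finger move pushes the Finger move past the subtrees of $R_1$, the inductive hypothesis handles the interaction with $R_2$, and the Exchange move repositions strands running along the edge of $\Sigma$ near the root. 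One has to check the C1-versus-C2 dichotomy for the move along $e$, exactly as in the Bulk Finger proof, but no genuinely new phenomena appear.

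The main obstacle will be bookkeeping in the inductive step: correctly tracking which strands lie on which side of the shared root, which of the two faces adjacent to the root each subtree lives on, and ensuring that every arc move invoked is supported in the prescribed neighbourhood of the root so that the "assume $R_1 \cup R_2$ is the only component" reduction is legitimate. In particular one must be careful that applying the inductive hypothesis to the configuration involving $R_2$ does not move strands outside the local neighbourhood — this is fine because the inductive hypothesis itself is local, but it needs to be stated that way and used consistently. I expect the verification that the mirror-image Clearing move C2$'$ (Figure~\ref{fig:clearingDirectionChange}) that may arise is again a composite of C2, Finger, and Exchange moves to be a routine but necessary check, already flagged in the proof of Theorem~\ref{thm:spineReidemeister}. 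Everything else is a direct adaptation of the Bulk Finger argument, so the proof should be short once the figure analogous to Figure~\ref{fig:BFMproof} is in place.
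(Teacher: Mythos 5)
Your plan matches the paper's proof closely: both reduce to a neighbourhood of the shared root (using that $R_1 \cup R_2$ is a whole component of $\mathbb{S}$), induct on $n_1+n_2$, treat the base case as a bare Finger move, and in the inductive step peel off the first Clearing move (splitting C1 vs.\ C2) and reassemble via Bulk Finger, Exchange, and inductive Finger Changes, exactly in the spirit of Figure~\ref{fig:BFMproof}.

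One point does need repair. The paper does not reduce to showing only $B_L \simeq A$; it proves $B_L \simeq A$ \emph{and} $B_R \simeq A$ together, under the single symmetry assumption that the first edge of the admissible order lies in $R_1$. You cannot consume the left--right symmetry twice: the mirror that turns $B_R$ into $B_L$ also swaps $R_1 \leftrightarrow R_2$, so if you have already fixed attention on $B_L$ you no longer get to assume the first cleared edge lies in $R_1$. Your ``WLOG $n_1 \geq 1$'' is not by itself enough to justify clearing the root edge of $R_1$ first, because the admissible ordering may start with the root edge of $R_2$; and you cannot appeal to the Reordering Lemma~\ref{lem:Order} to fix this, since its proof already invokes the Finger Change. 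The clean route is the paper's: carry both $B_L$ and $B_R$ through the induction with the first-edge-in-$R_1$ normalization. Two smaller slips, neither affecting the structure: when $n_1 = n_2 = 0$ the trees are empty (zero edges), not ``single boundary-adjacent edges,'' though your conclusion that the diagrams differ by one Finger move is right; and a Clearing move along the root edge of $R_1$ can detach up to \emph{three} subtrees at the cleared (four-valent) vertex, not two --- the paper names them $Q_1,Q_2,Q_3$ accordingly.
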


\begin{proof}
As with the Bulk Finger move, the diagrams $A$, $B_L$, and $B_R$ are identical outside a small neighbourhood of $R_1 \cup R_2$. Hence, as long as the arc moves relating them are performed locally in this neighbourhood, we may assume that the entire clearing forest consists only of $R_1\cup R_2$.

We proceed by induction on the total number of edges, $n=n_1+n_2$, in $R_1 \cup R_2$.
When $n=0$, the left Finger Change is precisely the FL move, and the right Finger Change is precisely FR.

Assume the lemma holds whenever $n<k$ and consider the case where $n=k$.
Let $\omega$ denote the admissible total order on the edges of $R_1\cup R_2$; we may assume, without loss of generality, that the first edge in this ordering belongs to $R_1$.
Performing the first Clearing move in $\omega$ splits $R_1$ into three smaller clearing trees $Q_1$, $Q_2$ and $Q_3$.

The arc-equivalence between $A$, $B_L$ and $B_R$ is shown in Figure~\ref{fig:fingerChangeTOP}, in the case when the first Clearing move is of type C1, and in Figure~\ref{fig:fingerChangeBOT} in the case when it is of type C2.
The key observation is that each clearing tree $Q_i$ (i=1,2,3) has strictly fewer edges than $R_1$, which means that the inductive hypothesis enables Finger Changes at the roots of these trees.
\end{proof}

\begin{figure}[!ht]
    \centering
    \includegraphics[width=\textwidth]{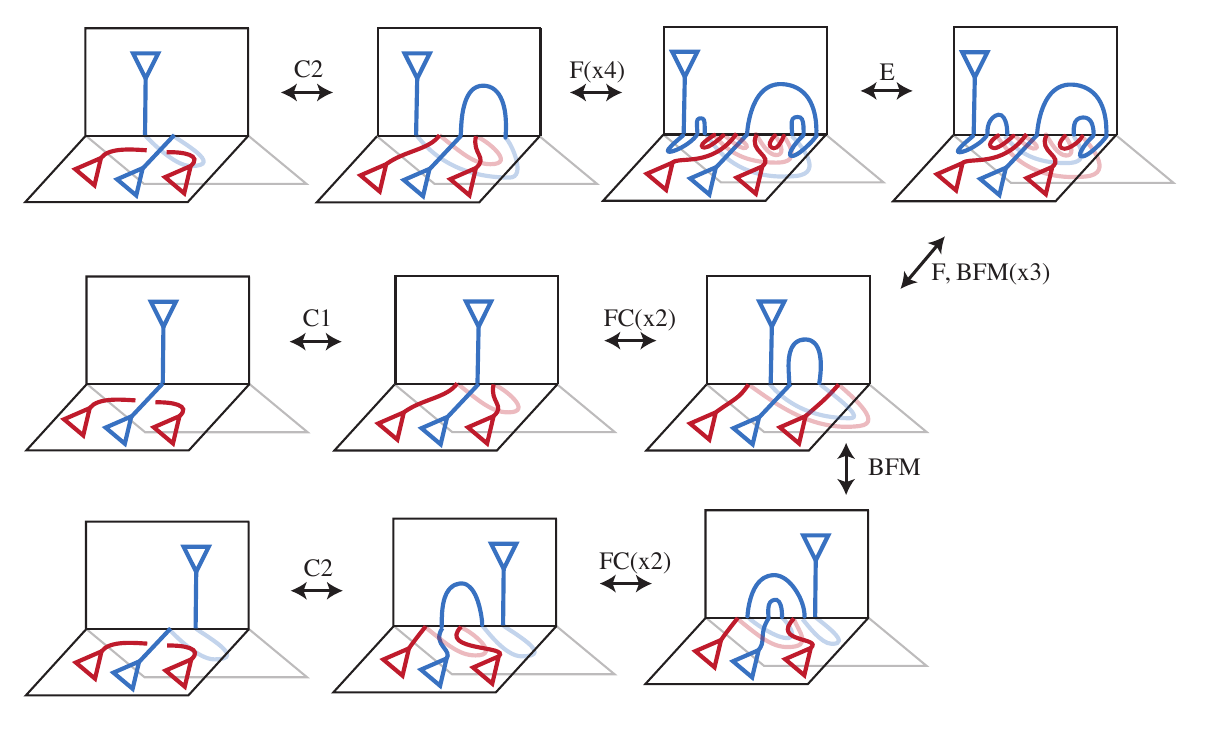}
    \caption{Inductive proof of the Finger Change where the first Clearing move is type C1. The first entry in the middle row shows the clearing forest leading to $A$; in the top row, to $B_R$; and in the bottom row, to $B_L$. Note that in three of the four applications of the inductive Finger Change, one of the two clearing trees is empty.}
    \label{fig:fingerChangeTOP}
\end{figure}

\begin{figure}[!ht]
    \centering
    \includegraphics[width=\textwidth]{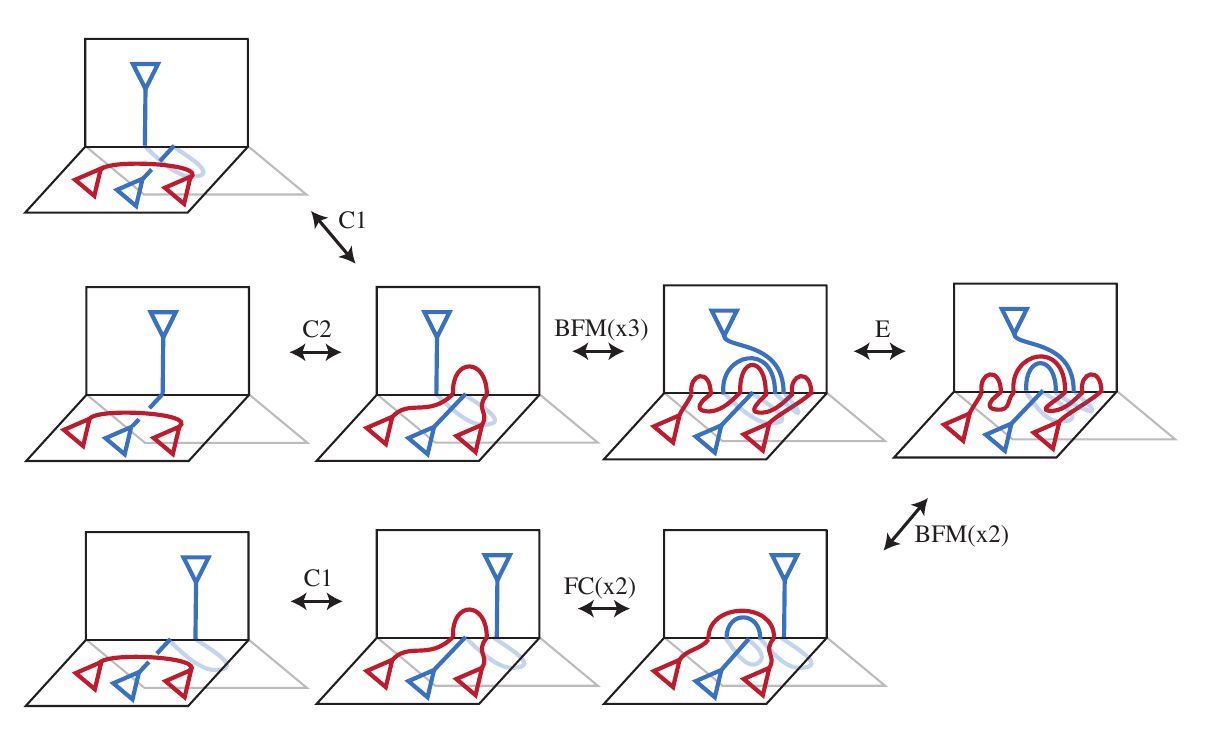}
    \caption{Inductive proof of the Finger Change where the first Clearing move is of type C2. As before, the middle row shows the clearing forest leading to $A$, the top row to $B_R$ and the bottom row to $B_L$.}
    \label{fig:fingerChangeBOT}
\end{figure}

The C2 move shown in Figure~\ref{fig:pmoves} performs a Finger move on the clearing strand, shifting the lower part of the vertical strand to the right.
Alternatively, one could define a Clearing move that shifts this to the left; see Figure~\ref{fig:clearingDirectionChange}.
As a corollary of the Finger Change Lemma~\ref{lem:fingerchange}, we prove the unsurprising result that this choice does not matter:

\begin{corollary}
\label{cor:clearingdirectionchange}
Consider a monotonic clearing of a face $F$ with tangle $T$. Altering the monotonic clearing by replacing any of the C2 moves with a C2' move, as in Figure~\ref{fig:clearingDirectionChange}, results in an arc-equivalent diagram.
\end{corollary}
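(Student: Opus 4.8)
The plan is to reduce a single C2 $\to$ C2$'$ replacement to the Finger Change Lemma, and then propagate the change through the rest of the monotonic clearing using the Clearing Forest Lemma. First I would observe that a C2 move and the corresponding C2$'$ move are performed along the same clearing strand and clear the same crossing; they differ only in which side of $\Sigma^{(1)}$ the lower portion of the vertical strand is pushed to. In the language of clearing forests, both correspond to the same oriented boundary-adjacent edge $e$ of $\Gamma(T,F)$, so by the Clearing Forest Lemma (\autoref{lem:ClearFor}) the ambient clearing forest $\mathbb{S}$ is unchanged by the substitution. The only difference is a local one at the moment the move is executed: a left versus right Finger move performed on the clearing strand at the boundary edge $e$.

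Next I would set up the induction on the position of the replaced C2 move within the monotonic clearing, or equivalently on the number of Clearing moves performed after it. If the replaced move is the last one, then after it is executed there are no further crossings on $F$, and the two resulting diagrams $A$ (from C2) and $A'$ (from C2$'$) differ exactly by the ambient picture in \autoref{fig:clearingDirectionChange}: one has the vertical strand fingered to the right, the other to the left. This base case is precisely (a mirror/composite of) the Finger Change set-up with both clearing trees $R_1$ and $R_2$ empty, so $A$ and $A'$ are arc-equivalent by \autoref{lem:fingerchange} (the $n=0$ case), or directly by an FL/FR move together with a Reidemeister-free spinal isotopy. For the inductive step, suppose the replaced C2 move is not last. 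Performing the first Clearing move of the monotonic clearing commutes with the rest of the argument: by the Clearing Forest Lemma it merely removes a vertex and passes to the induced clearing forest on $T'$, within which the replaced move now occurs one step earlier. Applying the inductive hypothesis to the monotonic clearing of $T'$ shows the two completed clearings of $T'$ are arc-equivalent, and since arc moves relating them can be taken to be supported away from the first cleared crossing, this arc-equivalence lifts to the completed clearings of $T$.

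An alternative, perhaps cleaner, route avoids induction on position entirely: use the Clearing Forest Lemma to reorder the given monotonic clearing so that the C2 move in question comes \emph{last} in an admissible ordering on a suitable sub-forest — more precisely, restrict attention to the connected component $R$ of $\mathbb{S}$ containing the edge $e$, and use that the edge ordering may be chosen so that $e$ (the root edge of $R$) is cleared after all edges of $R$ above it but the global completion is arc-equivalent regardless of the chosen admissible order (this reordering freedom is exactly what the Bulk Finger and Finger Change lemmas are built to supply). Then apply the base case. Either way, the essential input is the Finger Change Lemma together with the bijection of \autoref{lem:ClearFor}; the rest is bookkeeping.

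The main obstacle I anticipate is the locality/commutation claim in the inductive step — namely, verifying that the arc-equivalence produced for the clearing of $T'$ genuinely lifts back to an arc-equivalence for the clearing of $T$, i.e., that none of the arc moves are obstructed by the crossing that the first Clearing move removes, and that the resulting composite is still a legitimate sequence of arc moves and spinal isotopies. This is the same kind of ``arc moves can be performed locally'' argument used in the proofs of the Bulk Finger and Finger Change lemmas, so it should go through by the same reasoning, but it is the point that requires care rather than the appeal to \autoref{lem:fingerchange} itself.
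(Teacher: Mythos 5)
You have correctly identified the Finger Change Lemma (Lemma~\ref{lem:fingerchange}) as the engine of the proof, and you correctly reduce to the case of a single C2-to-C2$'$ replacement. However, both of your routes have a gap, and the gap is the same one in disguise: you are trying to force the situation into the \emph{empty-tree} base case of Finger Change, when the whole point of Lemma~\ref{lem:fingerchange} is that it already handles non-empty clearing trees at the root. The paper's proof simply performs the Clearing moves ordered \emph{before} the altered move, observes that the altered edge is now the root of a clearing tree $R$, and applies Finger Change directly with $R_1 \cup R_2$ being the (possibly large) remaining forest. No further induction on position is needed.

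Concretely, your primary route's induction does not cohere. You declare the parameter to be ``the number of Clearing moves performed after'' the replaced move, with base case zero (replaced move last). But in the inductive step you perform the \emph{first} Clearing move of the sequence, which does not decrease that parameter: after passing to $T'$, the number of moves ordered after the replaced move is unchanged. Worse, the inductive step silently assumes the replaced move is not first; if the replaced move is first but not last (a case your base case does not cover), there is no earlier Clearing move to peel off, and you are forced to invoke Finger Change with a non-empty tree anyway --- at which point the induction is doing no work. Your alternative route has a sharper problem: you propose to reorder so that the altered C2 move ``comes last,'' i.e., the root edge $e$ of its tree is cleared after all edges above it. This contradicts admissibility (condition~\ref{itm:ordering} of Definition~\ref{def:ClearingForest}): if $f$ lies on the path from $g$ to the root then $f < g$, so root edges are always cleared \emph{first} within their tree. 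The reordering you need is impossible whenever the tree rooted at $e$ has more than one edge.

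To repair the argument, drop the induction on position entirely: after performing the Clearing moves that precede the altered one, observe (as you do) that clearing with C2$'$ is clearing with C2 preceded by a Finger move at the root, and then invoke Lemma~\ref{lem:fingerchange} in its full strength, with the remaining clearing trees in the roles of $R_1$ and $R_2$. The only remaining induction, as in the paper, is on the number of C2 moves that have been replaced.
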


\begin{proof}
It is enough to prove that altering the direction of a single C2 move yields an arc-equivalent result.

Perform all the Clearing moves ordered before the crossing where the alternative C2 move will be performed. This brings the chosen C2 move to the root of a clearing tree.
 Changing the direction of the Clearing move is the same as performing a Finger move shifting to the left before clearing the crossing.  Thus, by Lemma~\ref{lem:fingerchange}, the diagrams are arc-equivalent. We can then induct on the number of altered Clearing moves to complete the proof.
\end{proof}

 \begin{lemma}[Reordering Lemma]\label{lem:Order}
 If two clearing forests $\mathbb S_1$ and $\mathbb S_2$ for $L$ are identical as graphs and differ only in the total orderings of their edges, then the corresponding cleared diagrams are arc-equivalent.
 \end{lemma}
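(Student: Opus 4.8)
The plan is to reduce the general case to swapping two \emph{adjacent} edges in the total ordering, and then to handle that elementary swap using the Bulk Finger and Finger Change lemmas. Since any two admissible total orderings on the same forest are connected by a sequence of transpositions of adjacent elements (in the sense of the ordering, not of the forest) — and since each such transposition can be chosen so that the intermediate ordering is still admissible, because admissibility only constrains the relative order of edges lying on a common root-path — it suffices to show: if $\mathbb{S}_1$ and $\mathbb{S}_2$ agree as graphs and their orderings differ by transposing two consecutive edges $e$ and $f$ (with $e$ immediately before $f$ in $\mathbb{S}_1$, and $f$ immediately before $e$ in $\mathbb{S}_2$), then the resulting cleared diagrams are arc-equivalent. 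Admissibility forces $e$ and $f$ to be incomparable along root-paths, so in particular neither is an ancestor of the other.

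Next I would perform, in both monotonic clearings, all Clearing moves that come strictly before $e$ and $f$ in the common ordering; these are identical for $\mathbb{S}_1$ and $\mathbb{S}_2$, so the two diagrams agree up to this point. After these moves, by the Clearing Forest Lemma (Lemma~\ref{lem:ClearFor}) the remaining configuration is governed by a clearing forest whose two lowest-ordered edges are $e$ and $f$; in particular, both $e$ and $f$ are now boundary-adjacent edges sitting at (or near) the roots of their respective clearing trees. There are two cases. If $e$ and $f$ lie in different connected components of the remaining clearing forest, then the Clearing move along $e$ and the Clearing move along $f$ are supported in disjoint neighbourhoods and commute on the nose, so the two orders produce literally the same diagram after these two moves, and hence identical final diagrams. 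If $e$ and $f$ lie in the same connected component, then — since $e,f$ are incomparable and both boundary-adjacent — that component is a tree containing two boundary leaves, which by Remark~\ref{rmk:ConnCpt}\eqref{itm:shareroot} must be a union of two trees $R_1, R_2$ on adjacent faces sharing a common root, with $e$ the root-edge of $R_1$ and $f$ the root-edge of $R_2$. Here I would use the Bulk Finger move to pull $R_1$ and $R_2$ down to their shared root via Finger moves, apply the Finger Change Lemma (Lemma~\ref{lem:fingerchange}) to see that clearing $e$-then-$f$ and clearing $f$-then-$e$ both land, up to arc-equivalence, at the diagram obtained by clearing the common-root configuration directly, and then undo the Bulk Finger moves; arc-equivalence of the two orders follows by transitivity.

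Finally, I would complete both clearings by performing the remaining Clearing moves (in the common order that follows $e$ and $f$), using that arc-equivalence is preserved under performing the same further Clearing moves on arc-equivalent intermediate diagrams — this is immediate since arc moves done in a region can be carried along, exactly as in the localisation arguments used in the Bulk Finger and Finger Change proofs. Chaining the transpositions gives the full statement. The main obstacle I anticipate is the bookkeeping in the same-component case: one must check carefully that after performing the earlier Clearing moves, the component containing $e$ and $f$ really does have the shared-root shape of Remark~\ref{rmk:ConnCpt}\eqref{itm:shareroot} (rather than some larger tree), and that the Finger Change Lemma applies with the correct left/right orientations; but this is exactly the situation those lemmas were built to handle, so no genuinely new idea should be required.
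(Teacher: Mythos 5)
Your proposal follows the same two-stage strategy as the paper: reduce to a single adjacent transposition $(i,i+1)$ of an admissible ordering, then split into cases depending on whether the two edges end up in the same or different connected components after the earlier clearings. Your Case A matches the paper's Case 1 (same tree originally, hence separate components by step $i$) and the "genuinely distinct components" part of its Case 2; your Case B matches the shared-root part of the paper's Case 2, resolved by the Finger Change Lemma. Two places where you are looser than the paper: (i) you assert, rather than prove, that any two admissible orderings are connected through adjacent transpositions each of which stays admissible — the paper carries this out with a short induction (pulling $k_1$ to the front via $(k_1,k_1-1),\dots,(k_1,1)$, then recursing) and this step is worth writing out since it is the only place the poset structure is genuinely used; (ii) in Case B your invocation of the Bulk Finger move is spurious (that lemma is for a single tree, Remark~\ref{rmk:ConnCpt}\eqref{itm:singletree}, not for the shared-root case), and your phrase ``both land at the diagram obtained by clearing the common-root configuration directly'' is not what happens — the correct intermediate object is the \emph{post-Finger-move} diagram, where $R_1$ and $R_2$ have been separated into distinct components so the two clearing orders literally commute, and the Finger Change Lemma~\ref{lem:fingerchange} supplies the arc-equivalences $A_{\omega_1}\sim B_{L}\sim A_{\omega_2}$. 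With those two points tightened, the argument matches the paper's.
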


 \begin{proof}
Let $\omega_i$ be the admissible total order on the edges of the clearing forest $\mathbb{S}_i$ for $i=1,2$. We claim that there exist transpositions $\tau_1,\ldots,\tau_r$ so that $\omega_2=\tau_r\cdots\tau_1\omega_1$, each intermediate permutation $\tau_j\cdots\tau_1\omega_1$ for $1\leq j \leq r$ is admissible, and the indices transposed by $\tau_j$ are adjacent in $\tau_{j-1}\cdots\tau_1\omega_1$, for $1\leq j\leq r$. We prove this claim by induction on $n$, the number of edges in $\mathbb S_1$ (or, equivalently, $\mathbb S_2$). For $n=1$ there is nothing to prove; assume the claim is true when the clearing forests have up to $n-1$ edges.

Number the edges $E(\mathbb S_1)=E(\mathbb S_2)$ of the clearing forests so that $i<_{\omega_1}i+1$ for $1\leq i\leq n-1$.  For convenience, we denote this as $\omega_1=[1,2,\ldots,n]$. Then $\omega_2$ is a permutation $[k_1, k_2,\ldots,k_n]$ of $\{1,2,\ldots,n\}$.

The goal is to pull $k_1$ to the front in $\omega_1$, so that the two permutations have the same first element. Denote the transposition of indices $i$ and $j$ by $(i,j)$, and observe that
$$(k_1,1)\cdots(k_1,k_1-2)(k_1,k_1-1)\omega_1=[k_1,1,2,\ldots,k_1-1,k_1+1,\ldots,n]=:\omega_1'.$$
For each of the transpositions $(k_1,i)$ in the composition above, the edges $k_1$ and $i$ appear in opposite orders in $\mathbb S_1$ and $\mathbb S_2$, both of which are admissibly ordered. Hence, $k_1$ and $i$ are incomparable in the partial order induced by the trees in $\mathbb S_1$ (equivalently, $\mathbb S_2$), and therefore the composition of an admissible permutation with $(k_1,i)$ remains admissible. In other words, all intermediate permutations
$(k_1,j)(k_1,j+1)\cdots(k_1,k_1-1)\omega_1$, for $1 \leq j \leq k_1-1$ are admissible. Note also that $k_1$ and $j$ are adjacent in $(k_1,j+1)\cdots(k_1,k_1-1)\omega_1$.

Now both $\omega_2$ and $\omega_1'$ begin with $k_1$. Clear the edge $k_1$; this results in clearing forests $\overline{\mathbb S_1}$ and $\overline{\mathbb S_2}$ with $n-1$ edges each, which are identical aside from their ordering. Any admissible order on $\overline{\mathbb S_i}$ induces an admissible order on $\mathbb S_i$ by inserting $k_1$ at the front. Therefore, the claim is true by induction.

Thus, it is enough to consider the case where $\omega_1=[1,2,\ldots,n]$ and $\omega_2$ differs from $\omega_1$ via a single transposition $\tau=(i,i+1)$.

{\em Case 1: The edges $i$ and $i+1$ belong to the same tree.} If the edges $i$ and $i+1$ lie in the same tree -- and we know they are incomparable in the partial order -- then, by the time we reach edge $i$ in the clearing process, $i+1$ will be the boundary-adjacent root of a separate component. (Otherwise, $i+1$ would be greater than $i$ in the partial order and they could not be swapped). We conclude that in this case, clearing via $\omega_1$ or $\omega_2$ leads to the same arc diagram.

{\em Case 2: The edges $i$ and $i+1$ belong to different trees.} By possibly performing a Finger Change, we can ensure that the trees containing $i$ and $i+1$ do not share a root; that is, they belong to genuinely distinct connected components of $\mathbb S$. Therefore, swapping the order of $i$ and $i+1$ results in identical arc diagrams.
 \end{proof}

\begin{remark}
A subtlety in the proof of the Reordering Lemma~\ref{lem:Order} is that when two trees on adjacent faces in $\mathbb S$ share a root -- Case~\ref{itm:shareroot} of Remark~\ref{rmk:ConnCpt} -- swapping the order in which they are cleared does change the resulting arc diagram. However, these are arc-equivalent by the Finger Change Lemma~\ref{lem:fingerchange}.
\end{remark}

\begin{lemma}[Bulk Clearing Lemma]\label{lem:bcl}
Let $v$ be a four-valent vertex of $\Gamma(T_i, F_i)$ with at least two boundary-adjacent edges, $e$ and $d$, incident to $v$.

Let $\mathbb{S}(e)$ and $\mathbb{S}(d)$ be two clearing forests that differ only in their choice of outgoing edge from $v$, so that $\mathbb{S}(d)=(\mathbb{S}(e)\setminus e)\cup d$.  Then the arc diagrams associated to clearing $L$ by $\mathbb{S}(e)$ and $\mathbb{S}(d)$ are arc-equivalent.
\end{lemma}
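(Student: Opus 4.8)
The plan is to reduce the statement to a purely local question at the vertex $v$ by pushing all the other clearing away, and then to resolve that local question using the Bulk Finger move (and, where necessary, the Finger Change move). First I would use the Reordering Lemma~\ref{lem:Order} to assume that $e$ (equivalently $d$) is the very first edge in the admissible order of $\mathbb{S}(e)$ (equivalently $d$ is first in $\mathbb{S}(d)$): since both $e$ and $d$ are boundary-adjacent, they are minimal in the tree partial order, so there is an admissible order beginning with either, and reordering changes the cleared diagram only up to arc-equivalence. With $e$ cleared first, the single Clearing move at $v$ splits the component of $\mathbb{S}(e)$ containing $v$ into (at most) three subtrees rooted at the three other edges formerly incident to $v$; the same is true for $d$. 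After this first move the two clearing forests $\mathbb{S}(e)$ and $\mathbb{S}(d)$ restrict to the \emph{same} collection of subtrees on all faces, with compatible orders, so the remainder of the two clearing processes produce identical diagrams. Thus everything is concentrated in comparing the two ways of performing the first Clearing move at $v$: clearing along $e$ versus clearing along $d$.

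The heart of the argument is therefore the local picture at a single crossing with two boundary-adjacent strands $e$ and $d$, where the other two strands $a,b$ carry the subtrees that get cleared afterward. Here I would first apply the Bulk Finger move to each of $a$ and $b$: this lets me pull the entire cleared tail on $a$ (resp. $b$) out to the boundary along a finger, so that near $v$ the diagram looks like a bare crossing of four strands, two of which ($e,d$) are short boundary arcs and two of which ($a,b$) are now simple arcs heading straight to the boundary with all their complexity exported outside the neighbourhood. Once the neighbourhood of $v$ contains only an honest crossing of two strands with all four ends on $\bdy F$, clearing along $e$ and clearing along $d$ differ by an explicit finite sequence of Finger and Exchange moves — essentially the same local manipulation that expresses the Exchange move as a composition of Reidemeister~II, C1, and Finger moves (Remark~\ref{rmk:Exchange move}, Figure~\ref{fig:exchangeMoveDerivation}), together with the Bulk Finger moves run in reverse to re-absorb the tails on $a$ and $b$. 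If one of $a$ or $b$ shares a root with $e$ or $d$ (Case~\ref{itm:shareroot} of Remark~\ref{rmk:ConnCpt}), I would first apply the Finger Change Lemma~\ref{lem:fingerchange} to separate them, so that the Bulk Finger move is legitimately applicable.

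I expect the main obstacle to be bookkeeping the crossing data: clearing along $e$ versus $d$ generically changes which of the four strands at $v$ passes over which, and the resulting over/under patterns on the finger arcs need not match up directly — they match only after a sequence of Finger moves and possibly a C2 versus C2$'$ substitution. Corollary~\ref{cor:clearingdirectionchange} handles the C2/C2$'$ ambiguity, and the Bulk Finger move handles the movement of the tails, so the task is to check that, for each of the finitely many local crossing configurations, these moves genuinely assemble into an arc-equivalence. This is a finite but slightly delicate case check, exactly parallel to the case analysis already carried out (with figures) in the proofs of the Bulk Finger and Finger Change lemmas, and I would present it the same way: reduce to the bare two-strand crossing, then exhibit the explicit move sequence in a figure, noting the split into the C1 and C2 subcases for the first Clearing move.
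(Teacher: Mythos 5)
Your plan has the right ingredients---localize, use Bulk Finger and Exchange moves, split into cases---but the reduction step it rests on is wrong, and the paper's reduction runs in exactly the opposite direction.

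You reorder so that $e$ (resp.\ $d$) is the \emph{first} edge cleared, and then assert: ``After this first move the two clearing forests $\mathbb{S}(e)$ and $\mathbb{S}(d)$ restrict to the \emph{same} collection of subtrees on all faces, with compatible orders, so the remainder of the two clearing processes produce identical diagrams.'' The first half of that sentence is true ($\mathbb{S}(e)\setminus\{e\}=\mathbb{S}(d)\setminus\{d\}$ as an ordered forest), but the conclusion is false. After clearing $v$ along $e$ you are left with a diagram $T'_e$; after clearing $v$ along $d$ you are left with a \emph{different} diagram $T'_d$, because the two Clearing moves push different strands across the face boundary. Running the same remaining clearing forest on two different diagrams does not yield identical, or even obviously equivalent, arc diagrams; the differences near $v$ propagate. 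So the ``everything is concentrated at $v$'' conclusion does not follow from your reordering. The paper avoids this trap by reordering the opposite way: it clears the trees containing $v$ \emph{last}, so that every edge outside those trees is cleared identically on both sides, the intermediate diagrams agree exactly up to that point, and the comparison genuinely reduces to the single tree containing $v$ (which may then be taken to be the entire forest). Note that, within that tree, $e$ or $d$ is necessarily cleared first because it is root-adjacent---so the key is ``tree last,'' not ``$e$ first.''

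There is a second, smaller issue. The subtrees hanging off the other two edges $a,b$ at $v$ are not rooted trees while $v$ is still present; they only become separate components of the forest \emph{after} $v$ is cleared. So one cannot ``first apply the Bulk Finger move to each of $a$ and $b$'' to isolate $v$ as a bare crossing: the Bulk Finger move acts at the \emph{root} of a clearing-tree component, not in the interior, and it does not export crossings out of a neighbourhood---it only says that performing a Finger move at a root before clearing produces an arc-equivalent result. The paper does not reduce to a bare crossing at all: after making the tree-containing-$v$ reduction and a Finger Change to ensure the Clearing moves at $v$ are type C1, it exhibits an explicit move sequence (Figure~\ref{fig:bulkClearingProof}) using Exchange and Bulk Finger moves, with the two cases distinguished by whether $e$ and $d$ lie on the same strand. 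Your instinct to finish with a figure-based case analysis using these same moves is sound; it is the reduction to that finite local picture that needs to be redone along the paper's lines.
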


\begin{proof}
Applying Lemma~\ref{lem:Order} (Reordering) allows us to assume that the trees of $\mathbb{S}(e)\cap F_i$ and $\mathbb{S}(d)\cap F_i$ containing $v$ are cleared last. Thus it suffices to consider the case where these trees constitute their entire respective clearing forests.

We also assume that the Clearing moves along $e$ and $d$ are of type C1; if not, then further Finger Changes can be performed to achieve this. Figure~\ref{fig:bulkClearingProof} shows a sequence of Exchange and Bulk Finger moves that establish the arc-equivalence of the arc diagrams associated to clearing by $\mathbb{S}(e)$ and $\mathbb{S}(d)$.  The two cases are distinguished by whether or not $e$ and $d$ are edges lying on the same strand of the diagram.
\end{proof}

\begin{figure}[!ht]
    \centering
    \includegraphics[width=\textwidth]{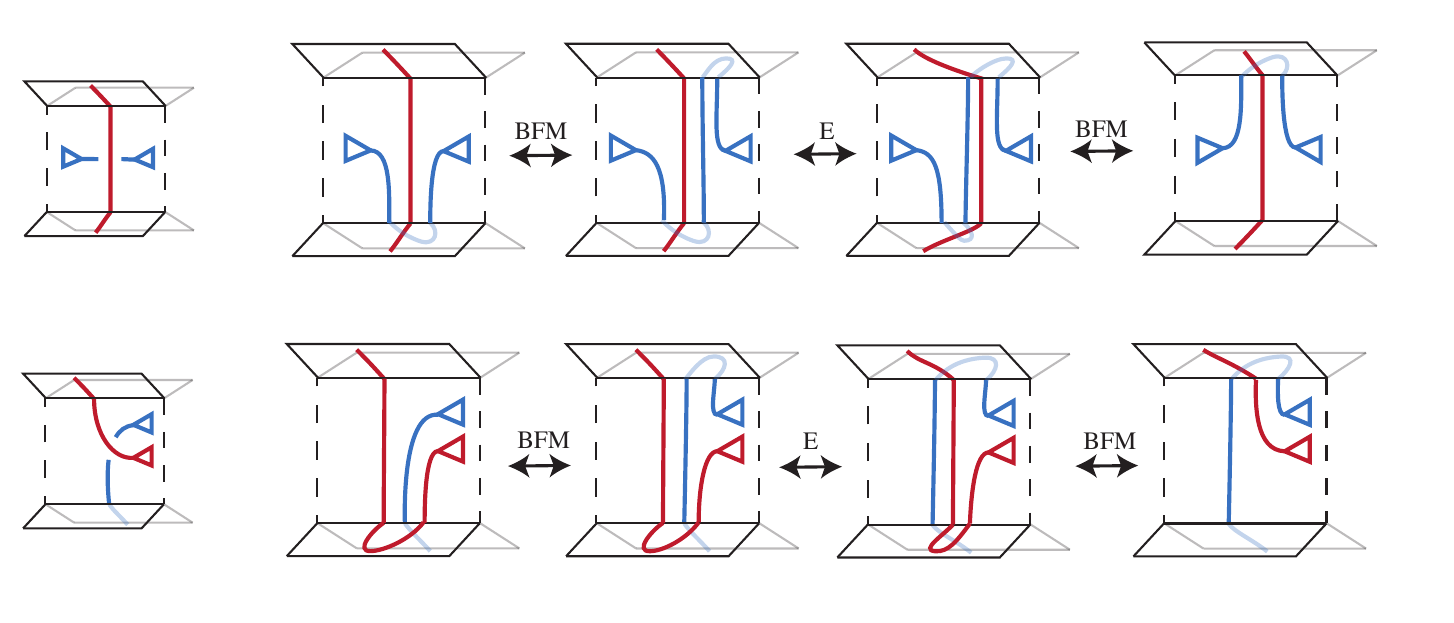}
    \caption{Proof of the Bulk Clearing Lemma. Top: $e$ is the red edge to the bottom and $d$ is the red edge to the top. Bottom: $e$ is the blue edge to the bottom and $d$ is the red edge to the top.}
    \label{fig:bulkClearingProof}
\end{figure}

\subsection{Independence of choice of monotonic clearing}
\label{subsec:pfarcs}

 We now assemble the lemmas of the previous subsection to show that different choices of clearing forest preserve the arc-equivalence class of the resulting arc diagrams.

\begin{proposition}\label{prop:choiceofforest}
Let $L$ be a generic link diagram on the spine $\Sigma$. Let $A_1$ and $A_2$ be two arc diagrams obtained by monotonically clearing the crossings of $L$. Then $A_1$ and $A_2$ are arc-equivalent.
\end{proposition}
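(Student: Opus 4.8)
The plan is to reduce the statement to the combinatorial language of clearing forests established in the Clearing Forest Lemma~\ref{lem:ClearFor}, and then to show that any two clearing forests for $L$ can be connected by a sequence of elementary modifications, each of which changes the resulting arc diagram only within its arc-equivalence class. By Lemma~\ref{lem:ClearFor}, the arc diagrams $A_1$ and $A_2$ correspond to clearing forests $\mathbb{S}_1$ and $\mathbb{S}_2$ for $L$, so it suffices to prove that the arc-equivalence class of the cleared diagram depends only on $L$, not on the choice of clearing forest. First I would observe that two clearing forests can differ in three essentially independent ways: (i) in the total ordering of their edges, with the underlying graph fixed; (ii) in the choice of clearing direction at some four-valent vertex whose incoming edges give it more than one boundary-adjacent option; and (iii) more generally in the underlying spanning subforest itself. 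Item (i) is handled immediately by the Reordering Lemma~\ref{lem:Order}, and item (ii) in the special case of two boundary-adjacent edges at a common vertex is exactly the Bulk Clearing Lemma~\ref{lem:bcl}.

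The substantive work is to show that \emph{any} two spanning clearing subforests of the collection of face graphs $\Gamma(T_i, F_i)$ are connected by a sequence of moves of the above types. The natural approach is an exchange/matroid-style argument: fix $\mathbb{S}_1$ and $\mathbb{S}_2$, and show that if they differ, one can perform a single edge swap on $\mathbb{S}_1$ — removing an edge $e\in\mathbb{S}_1\setminus\mathbb{S}_2$ and adding an edge of $\mathbb{S}_2\setminus\mathbb{S}_1$ — so that the result is still a clearing forest and is strictly closer to $\mathbb{S}_2$ (say, in the number of shared edges), while the associated arc diagram stays in the same arc-equivalence class. The spanning-forest and rootedness conditions of Definition~\ref{def:ClearingForest} are precisely the conditions defining bases of a (graphic-type) matroid on each face graph — each component must be a tree containing exactly one boundary vertex, which is the same as contracting all boundary vertices of $\Gamma(T_i,F_i)$ to a single point and taking a spanning tree — so the basis-exchange property guarantees that such a swap exists as a combinatorial matter. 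The remaining point is to certify that each such single-edge swap is realised by arc moves: when the swap simply reattaches a subtree at a different vertex along its clearing path, one can use the Reordering Lemma to bring the affected trees to be cleared last, reduce to the case where the forest is just those trees, and then apply the Bulk Finger and Bulk Clearing Lemmas (together with Corollary~\ref{cor:clearingdirectionchange} to normalise C2 moves to C1 moves) to see that the two cleared diagrams are arc-equivalent. I would present this as an induction on the symmetric difference $|\mathbb{S}_1 \triangle \mathbb{S}_2|$, with the base case $\mathbb{S}_1 = \mathbb{S}_2$ covered by the Reordering Lemma alone.

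The main obstacle I anticipate is \textbf{bookkeeping the interaction between faces}, i.e., handling clearing trees on adjacent faces that share a root (Case~\ref{itm:shareroot} of Remark~\ref{rmk:ConnCpt}). An edge swap that is local to a single face graph $\Gamma(T_i, F_i)$ may nonetheless change which boundary vertex becomes the root of a component, and hence may reconfigure a shared-root pair of trees on two adjacent faces; conversely, ensuring via the Reordering Lemma that the relevant trees are cleared last can be delicate when those trees straddle a spine edge. The Finger Change Lemma~\ref{lem:fingerchange} is the designated tool here — it is exactly what lets us slide a Finger move past a shared root and thereby convert between configurations that differ in which of the two faces "owns" the clearing near that edge — so the argument should go through, but the case analysis (single-tree components versus shared-root components, C1 versus C2 first moves, and whether the swapped edges lie on the same strand) will need to be organised carefully, presumably by always first using Lemma~\ref{lem:fingerchange} to split shared-root components into genuinely distinct components before performing any swap. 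Once the diagram is reduced to a single clearing tree or a single shared-root pair, the Bulk Finger, Bulk Clearing, and Reordering Lemmas do all the remaining work.
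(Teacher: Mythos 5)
Your reduction to clearing forests and your catalogue of the key tools (Reordering, Bulk Clearing, Bulk Finger, Finger Change) are right, and your worry about shared-root components is a real one --- but that subtlety is already absorbed into the Reordering Lemma's proof (Case 2) rather than being a new issue here. The matroid observation is also essentially correct: a clearing forest for a face $F$ is the same data as a spanning tree of $\Gamma(T,F)$ with its boundary vertices contracted to a single point, so the basis-exchange property does give you a combinatorial path between any two underlying subforests via single edge swaps.

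The gap is in certifying that an arbitrary basis-exchange swap is realised by arc moves. The Bulk Clearing Lemma~\ref{lem:bcl} only handles the swap $\mathbb{S}(d)=(\mathbb{S}(e)\setminus e)\cup d$ when $e$ and $d$ are \emph{both} boundary-adjacent edges incident to a \emph{common} vertex $v$; a generic matroid swap need not look like this --- for instance, removing a non-boundary-adjacent edge $e_2$ in the interior of a tree and adding a boundary-adjacent edge $e_3$ somewhere else is a legitimate basis swap that Lemma~\ref{lem:bcl} says nothing about, and neither Bulk Finger nor Reordering fixes this. The paper sidesteps this by never trying to realise arbitrary swaps: it inducts on the number of crossings $n$, and within the inductive step only needs one very specific swap, namely replacing the outgoing edge $d$ from the interior vertex $v$ of a boundary-adjacent edge $e$ by $e$ itself. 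That special swap is then \emph{itself} proved to preserve arc-equivalence by an inner induction on $n$ --- if $\mathbb{S}$ and $\mathbb{S}(e)$ share some other boundary-adjacent edge $f$, clear along $f$ first and recurse, so that the base case (unique boundary-adjacent edges $d$ and $e$, forcing them to meet at $v$) is \emph{exactly} the hypothesis of the Bulk Clearing Lemma. Your proposal, inducting on symmetric difference $|\mathbb{S}_1\triangle\mathbb{S}_2|$ alone, is missing this inner $n$-induction, and without it there is no route from an arbitrary swap down to the configuration that Lemma~\ref{lem:bcl} actually covers. You could likely repair your argument by adding a crossing-count induction inside the swap-realisation step, but at that point you have essentially rediscovered the paper's proof and the matroid layer becomes unnecessary overhead.
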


\begin{proof}
Let $\mathbb S_1$ and $\mathbb S_2$ denote the clearing forests that yield $A_1$ and $A_2$, respectively. We proceed by induction on the number $n$ of crossings of $L$, or equivalently, the number of vertices of each $\mathbb S_i$. If $n=1$, one can directly check that the results of all clearing procedures are arc-equivalent.

Assume that the proposition holds for all link diagrams with $n-1$ crossings and suppose $L$ has $n$ crossings. The Reordering Lemma~\ref{lem:Order} allows us to choose the order in which crossings are cleared.   If $\mathbb S_1$ and $\mathbb S_2$ have at least one boundary-incident (that is, $\Sigma^{(1)}$-incident) edge in common, choose this edge to be first in both orders and clear the first crossing along it. This produces clearing forests $\mathbb S_1'$ and $\mathbb S_2'$ for the same link diagram with $n-1$ crossings, and the inductive hypothesis applies.

Now suppose $\mathbb S$ is an arbitrary clearing forest for L and $e$ is a boundary-incident edge of $\Gamma(T,F)$ whose interior vertex is labeled $v$.  Label the outgoing edge from $v$ by $d$, and let $\mathbb S(e)=(\mathbb S\setminus d)\cup e$.  We claim that $\mathbb S$ and $\mathbb S(e)$ are arc-equivalent.  Assuming the claim, we address the case of $\mathbb S_1$ and $\mathbb S_2$ having no boundary-incident edges in common by first replacing each of $\mathbb S_i$ by $\mathbb S_i(e)$ (i=1,2) which share a common boundary adjacent edge.  This returns us to the first case, and leaves only the claim to prove.

 Suppose first that $\mathbb S(e)$ has more than one boundary-adjacent edge, and so in particular has some boundary-adjacent edge $f$ that is distinct from $e$. Since $\mathbb{S}$ and $\mathbb{S}(e)$ differ only in one edge, $\mathbb{S}$ must also contain this edge $f$. Applying the Reordering Lemma if necessary, clear the first crossing along $f$ to yield forests with $n-1$ vertices.  The associated arc diagrams are arc-equivalent by the inductive hypothesis.

 Finally, suppose that $d$ and $e$ are the unique boundary-adjacent edges for $\mathbb S$ and $\mathbb S(e)$, respectively.  Then $\mathbb{S}$ and $\mathbb{S}(e)$ exactly satisfy the hypotheses of Lemma~\ref{lem:bcl}, and the associated arc diagrams are arc-equivalent.
\end{proof}

\subsection{Reidemeister moves} Now that we know that all monotonic clearings of a link diagram $L$ are arc-equivalent, we show that Reidemeister moves performed on faces also preserve the arc-equivalence class of the cleared diagram.

\begin{proposition}\label{prop:reidarcs}
If two link diagrams $L_1$ and $L_2$ differ by a single Reidemeister move on the interior of a face $F$ of $\Sigma$, then the arc diagrams formed by clearing all the crossings of $L_1$ and $L_2$ are arc-equivalent.
\end{proposition}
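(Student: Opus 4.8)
The plan is to reduce the statement to the previously established Proposition~\ref{prop:choiceofforest} (independence of the choice of monotonic clearing) by choosing, for each of $L_1$ and $L_2$, a clearing forest that is well-adapted to the small disc $B \subset F$ in which the Reidemeister move takes place. First I would fix notation: write $R$ for the Reidemeister move, supported in a disc $B$ in the interior of $F$, so that $L_1$ and $L_2$ agree outside $B$; inside $B$ the diagrams differ by one of the three Reidemeister moves. The key idea is to clear everything outside $B$ first, in a way that is identical for $L_1$ and $L_2$. Concretely, enlarge $B$ slightly to a disc $B'$ with $B \subset \interior B'$ and $\bdy B'$ transverse to both diagrams, and meeting them in the same points (since the diagrams agree outside $B$). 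Choose clearing forests $\mathbb{S}_1$ and $\mathbb{S}_2$ for $L_1$ and $L_2$ that agree outside $B'$ and whose restrictions to $F$ route every strand out through $\bdy B'$ and then on to $\Sigma^{(1)}$ along a common pattern. By Proposition~\ref{prop:choiceofforest} it suffices to prove the claim for \emph{these particular} clearing forests.

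Next I would clear, in both $L_1$ and $L_2$, all crossings lying outside $B'$, using the common part of $\mathbb{S}_1$ and $\mathbb{S}_2$ and (via the Reordering Lemma~\ref{lem:Order}) doing them first. After this stage the two partially-cleared diagrams agree outside $B'$, and inside $B'$ they consist of the original tangle in $B$ (differing by $R$) together with a collection of crossingless strands running from $\bdy B$ out to $\bdy B'$ and then out of $F$; these strands are the same for both. So the problem is now local to $B'$: we have two tangles in a disc that differ by a single Reidemeister move, each with a chosen clearing forest for the crossings inside, and we must show the cleared results are arc-equivalent. Since $B'$ lies in the interior of a single face, the crossings inside $B'$ form an honest planar tangle, and the clearing moves used there are all of type performed inside one face. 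The heart of the argument is then a finite check: for each of the three Reidemeister moves, exhibit an explicit sequence of Finger, Exchange, and Vertex (in fact mostly Finger and Exchange) moves relating the arc diagram obtained by clearing the ``before'' tangle to that obtained from the ``after'' tangle. For Reidemeister I and II this is a short direct picture; for Reidemeister III one can first reduce, using Proposition~\ref{prop:choiceofforest} again and the Bulk Finger and Bulk Clearing Lemmas, to a standard choice of clearing directions for the three crossings involved, and then do the explicit local computation.

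I expect the Reidemeister III case to be the main obstacle. There the three crossings can be cleared along several combinatorially distinct families of directions, and the two diagrams (before and after the R3 move) need not admit "matching" clearing forests in any obvious way; bringing them to a normal form will require careful bookkeeping and repeated appeals to the Bulk Clearing Lemma~\ref{lem:bcl} (to swap clearing directions at a vertex) and the Bulk Finger Lemma (to isolate individual crossings at the roots of clearing trees). A secondary technical point is making precise the ``clear everything outside $B'$ first'' step: one must check that the common exterior part of $\mathbb{S}_1$ and $\mathbb{S}_2$ can genuinely be chosen identically and ordered before the interior edges while remaining admissible, which follows because the exterior portions of the two diagrams literally coincide and because admissibility only constrains edges within a single tree. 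Once the problem is localized to $B'$, the remaining work is a bounded diagrammatic verification of the kind already carried out in Figures~\ref{fig:BFMproof}, \ref{fig:fingerChangeTOP}, \ref{fig:fingerChangeBOT} and \ref{fig:bulkClearingProof}.
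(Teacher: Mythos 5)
Your plan follows essentially the same route as the paper's proof: reduce via Proposition~\ref{prop:choiceofforest} and the Reordering Lemma~\ref{lem:Order} to a situation where everything away from the Reidemeister-move region is cleared identically in both diagrams, and then verify each of the three moves by an explicit local picture. The paper localizes a little differently, by choosing clearing forests that simply avoid the arcs and vertices involved in the move (rather than introducing an auxiliary disc $B'$); this sidesteps the admissibility concern you flag about ordering exterior edges before interior ones, since the paper's forests do not need to exit and possibly re-enter a region. For Reidemeister~III, the normalization by Bulk Finger and Bulk Clearing moves that you anticipate is not needed: once Proposition~\ref{prop:choiceofforest} grants freedom to choose any clearing forest, the paper simply extends the common forest to the three new vertices in a compatible way for both tangles and exhibits the resulting arc-equivalence directly in a figure, so invoking both Proposition~\ref{prop:choiceofforest} and the Bulk lemmas for the same purpose would be redundant.
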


\begin{proof}
There are three cases depending on the type of Reidemeister move. In all cases we assume without loss of generality that $L_1$ and $L_2$ have crossings on the face $F$ only; if there are crossings on other faces, clear those first, by the same clearing forest.

\begin{enumerate}
    \item Reidemeister I: Consider first the diagram with fewer crossings and choose a clearing forest that does not include the edge where R1 creates a loop.  The diagram with the loop has one additional crossing; clear this along a non-loop edge -- which is outgoing from the new vertex -- and order this edge last.  After performing all the lower-ordered Clearing moves, Figure~\ref{fig:RI} provides the required arc-equivalence.

    \begin{figure}[!ht]
    \centering
    \includegraphics[scale=0.6]{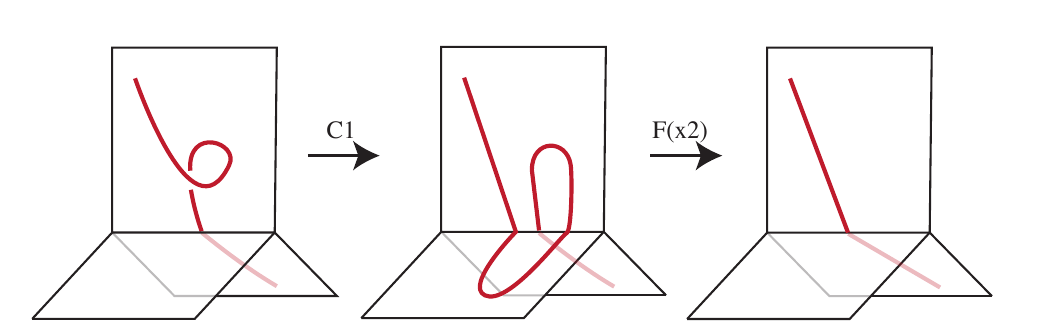}
    \caption{After clearing all crossings away from the R1 move, we can use finger moves to show arc-equivalence.}
    \label{fig:RI}
\end{figure}

    \item Reidemeister II: Let $T$ be the tangle diagram with fewer crossings on $F$, and $T'$ be the diagram with more crossings. Label the new vertices in $\Gamma(F, T')$ by $v$ and $w$. Choose a clearing forest for $\Gamma(F, T)$ that does not involve the arcs where the Reidemeister II move takes place; this is always possible by edge enumeration in a four-valent graph.  Extend this to a clearing forest for $\Gamma(F, T')$ by adding an edge from $v$ to $w$ and an edge from $w$ to a vertex in the existing clearing forest, and as above, choose these to come last in the total order; this is possible because there are no edges oriented towards $v$ or $w$. The sequence of moves in Figure~\ref{fig:RII} completes the argument.

    \begin{figure}[!ht]
    \centering
    \includegraphics[width=\textwidth]{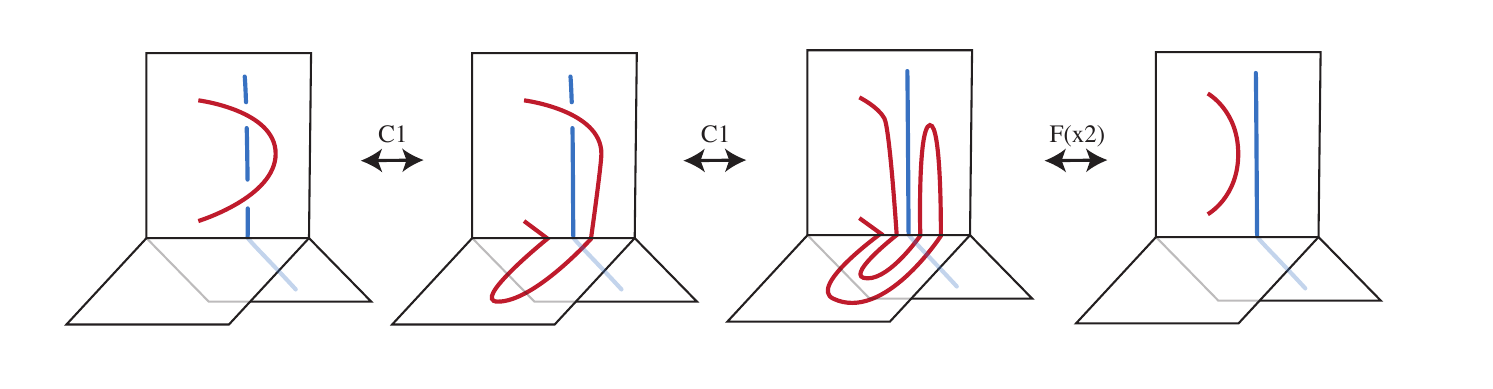}

    \caption{For the R2 move, after clearing the crossings associated to $v$ and $w$ in $T'$, a Finger move recovers the diagram associated to a clearing of $T$.}
    \label{fig:RII}
\end{figure}

    \item Reidemeister III: Again let $T$ and $T'$ denote the two tangles on $F$, differing by a single R3 move. First choose common forests for $\Gamma(F, T)$ and $\Gamma(F, T')$ that do not span the three vertices involved in the Reidemeister III move.  Extend these as shown in Figure~\ref{fig:RIII}, again ordering these edges last.
    The figure shows the required arc-equivalence after the earlier clearings are performed.
    \qedhere
\end{enumerate}

      \begin{figure}[!ht]
    \centering
    \includegraphics[width=\textwidth]{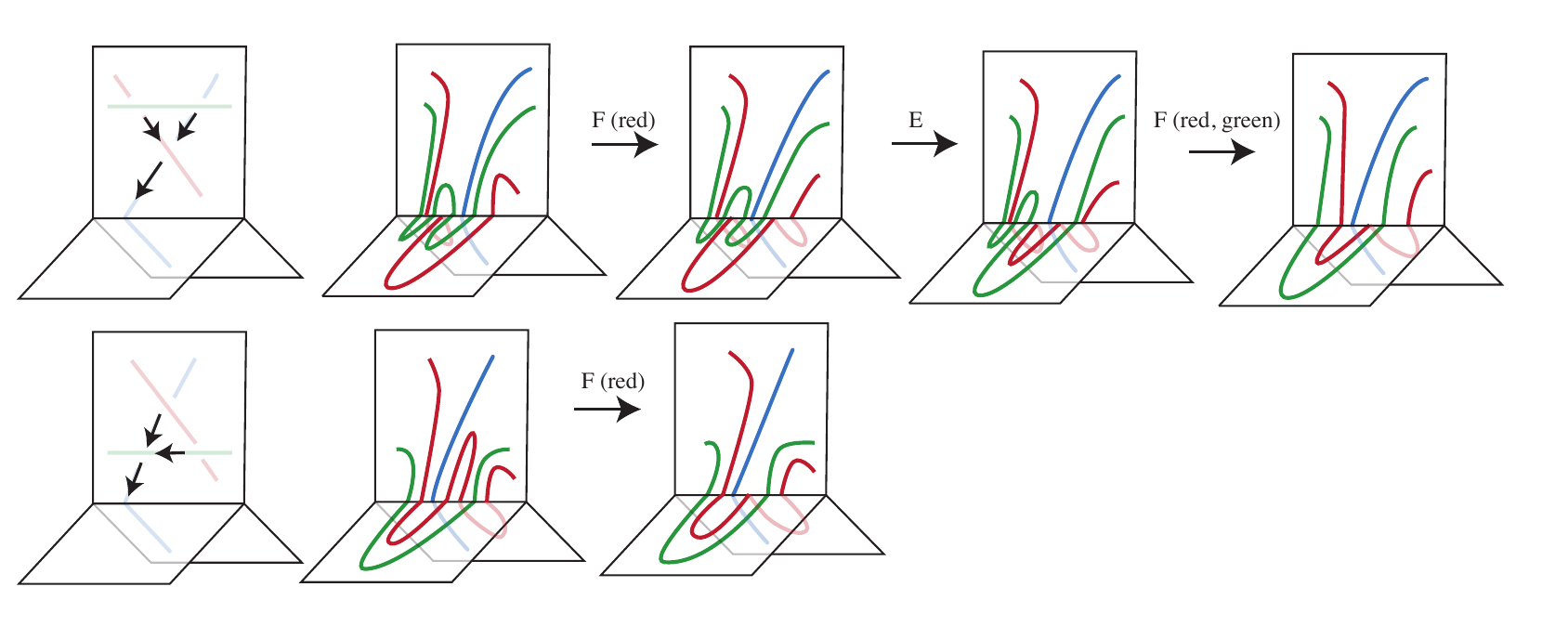}
    \caption{The two rows show partial link diagrams which differ by a single R3 move.  In each case, the figure on the far left indicates the original diagram and a choice of (unordered) clearing forest.  The remaining figures illustrate that after clearing all the crossings as indicated, the diagrams are arc-equivalent.}
    \label{fig:RIII}
\end{figure}

\end{proof}

\subsection{Sufficiency of arc moves}
We are now ready to prove the main theorem: any two arc diagrams of the same link in $(M, \Sigma)$ are arc-equivalent.

\begin{proof}[Proof of Theorem~\ref{thm:amovessuffice}]
Fix a spine $\Sigma$ for the 3-manifold $M$.
Let $A$ and $B$ be two arc diagrams for the same link with respect to $(M, \Sigma)$.
By Theorem~\ref{thm:spineReidemeister}, there is a sequence of link diagrams (not necessarily arc diagrams) from $A$ to $B$ such that each consecutive pair of link diagrams in the sequence is related by one of the following moves: a Reidemeister move in the interior of a face, or a Clearing, Finger, or Vertex move.

If we arbitrarily clear each link diagram in the sequence, it suffices to show that adjacent pairs of cleared diagrams are arc-equivalent: this will provide a sequence of arc moves from $A$ to $B$.
By Proposition~\ref{prop:choiceofforest}, any two choices of monotonic clearing are arc-equivalent, so we can pick the clearing we wish to use at each step.

For each pair of adjacent diagrams, our argument (unsurprisingly) depends on the type of move that relates this pair.
\begin{itemize}
    \item For a Reidemeister move, Proposition~\ref{prop:reidarcs} is the required result.
    \item For a Clearing move, we can choose clearings such that they clear to the \emph{same} arc diagram.
    \item For a Finger move on a strand $e$, we can pick clearings for the two diagrams that are identical until $e$ is boundary-adjacent.
    We can further choose the clearings so that $e$ is the root of a tree, and then by Lemma~\ref{lem:fingerchange} the diagrams are arc-equivalent.
    \item For a Vertex move, we may choose clearing forests for the two link diagrams that do not include the arc involved in the move.
    Then the resulting arc diagrams are related by the original Vertex move.
    \qedhere
\end{itemize}

\end{proof}

\begin{remark}
One natural question is what bound one can give on the number of moves needed in Theorem~\ref{thm:amovessuffice} to get from one arc diagram for a link $L$ on a spine $\Sigma$ to another.
The main challenge is to give a bound on the number of (general) moves needed in Theorem~\ref{thm:spineReidemeister}
to connect two arc diagrams (or, more generally, two link diagrams).
We expect that one only needs to increase the number of general moves by a polynomial factor to convert a general move sequence into a sequence of {\em crossingless} moves.
One might be able to draw on existing work on Reidemeister moves to tackle the main challenge:
Lackenby showed that, given a diagram of the unknot in $S^3$ with $c$ crossings, at most $(236c)^{11}$ Reidemeister moves
are required to simplify this diagram to the trivial one~\cite{LackenbyPolynomialReidemeister}.
Lackenby's proof combined normal surface theory with Dynnikov's work on arc representations~\cite{Dynnikov}.
Since Dynnikov's arc representations are, as mentioned in the Introduction, related to our definition of arc diagrams, similar techniques might be effective to answer the following question:
\end{remark}

\begin{question}
Is there an explicit -- perhaps even a polynomial -- bound on the number of arc moves needed to get between two arc diagrams of a link $L$ to a spine $\Sigma$ in some 3-manifold, in terms of the number of strands of the diagrams and invariants of $L$ and $\Sigma$?
\end{question}

\section{The Lightbulb Trick}

As an application of the results above, we offer a straightforward proof of the classical Lightbulb Theorem: if $K$ is a knot that intersects an $S^2$ fibre of $S^2\times S^1$ once, then $K$ is isotopic to an $S^1$ fibre.

While not difficult, the standard proof requires some visualisation for the process of isotoping the original $K$ to a fibre.  Our proof relies on the results above to realise this argument diagrammatically.

One may naturally associate a spine to a Heegaard splitting by considering the union of the Heegaard surface and a system of cutting discs for each handlebody.  Applying this to the genus-one Heegaard splitting of $S^2\times S^1$, we consider a Heegaard torus and two disjoint meridian discs $D_1$ and $D_2$. The boundary of the discs cuts the torus into two annuli, $A_1$ and $A_2$, and the $S^2$ fibre is isotopic to the union of both discs together with one of these annuli.
Up to isotopy, we can assume that any knot is contained in one of the solid tori.  The hypothesis that $K$ intersects an $S^2$ fibre once lets us further assume that the projection of $K$ to the spine intersects
\begin{itemize}
\item $D_i$ in a single point on $\partial D_i$, for $i=1,2$
    \item $A_2$ in an arc $s$; and
    \item $A_1$ in a tangle.
\end{itemize}

We claim that the number of crossings in the $A_1$ tangle may be reduced while preserving these conditions on the diagram.  It follows that the arc diagram constructed thus is a longitude for both solid tori in the Heegaard splitting, and therefore $K$ is isotopic to an $S^1$ fibre.

The claim is proved in Figure~\ref{fig:lightbulb}.

\begin{figure}[!ht]
    \centering
    \includegraphics[width=\textwidth]{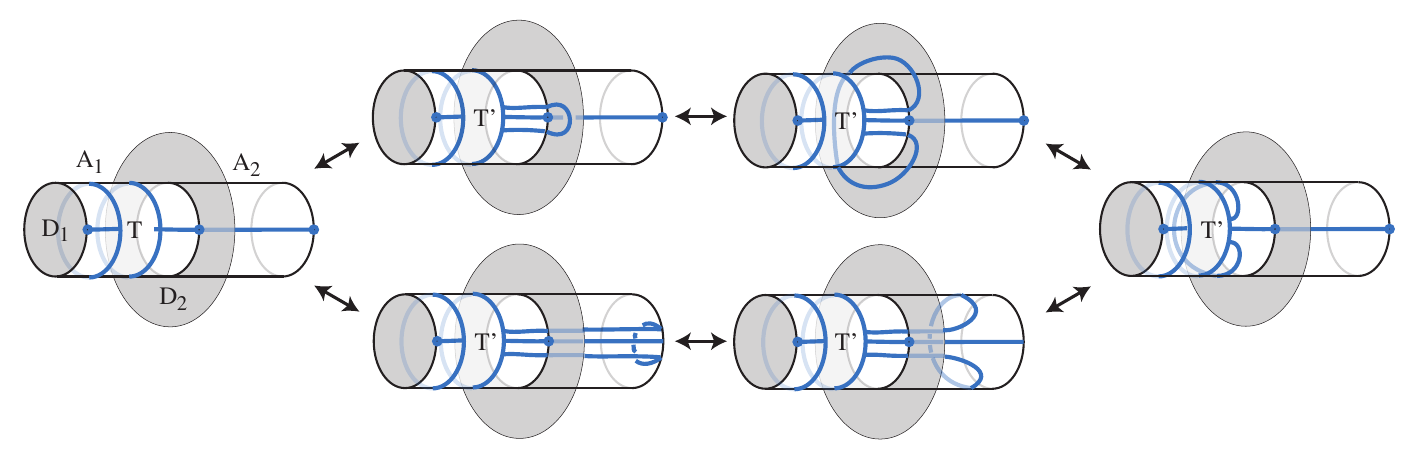}
    \caption{We may reduce the number of crossings in the tangle $T$.  The upper path corresponds to clearing an overcrossing and the lower path, to clearing an undercrossing.}
    \label{fig:lightbulb}
\end{figure}

\section{Shadows of 4-manifolds}

In this final section, we consider a relationship between trivalent spines and 4-manifolds. Turaev's  theory of {\it shadows} considers $2$-polyhedra with additional data: each face is labeled with a {\it gleam} taking values in the integers or half-integers where the topology of the 2-polyhedron determines whether the gleam is integral or half-integral.  The resulting labeled object is called an {\it integer shadowed polyhedron}.  Although these may be studied as purely combinatorial objects, much of the interest in integer shadowed polyhedra is due to the fact that each such polyhedron canonically determines a PL $4$-manifold. Roughly speaking, the construction uses the data of labeled faces to attach 4-dimensional $2$-handles to a 4-dimensional thickening of $\Sigma^{(1)}$; we refer the reader to \cite{CostantinoShadowsIntro} or IX.6.1 of \cite{Turaev} for details.

Turaev introduces a family of combinatorial {\it shadow moves} and shows that two integer shadowed polyhedra related by shadow moves determine PL-homeomorphic $4$-manifolds (Theorem IX.6.2,
\cite{Turaev}).  Turaev also introduces a more general notion of {\it stable shadow equivalence}; two integer shadowed  polyhedra which induce the same $4$-manifold are stably shadow equivalent  (Theorem IX.1.7, \cite{Turaev}), but it is unknown whether they are necessarily shadow equivalent.  In the discussion below, we briefly outline the relevance of our main theorem to this still-open question.

A simple technique for upgrading a spine of an oriented $3$-manifold $M$ to a shadowed polyhedron is to assign a gleam of $0$ to each face.  In this setting, any generic projection $D$  of a framed link $L$ to the spine $\Sigma$ allows us to construct a new shadowed polyhedron known as the {\it shadow cone} $CO(\Sigma, D)$, whereby $D$ becomes the attaching locus for a new face whose gleam is determined by the framing. Turaev shows that the stable shadow equivalence class of the shadow cone is independent of the choice of spine and the representative of the framed isotopy class of $L$ (IX.3.3, \cite{Turaev}).

We consider the case when the projection of $L$ is crossingless.
It is straightforward to show that each of our moves relating arc diagrams can be expressed as a composition of shadow moves on the associated shadow cones, establishing the following result as a  corollary of Theorem~\ref{thm:amovessuffice}:

\begin{theorem}\label{thm:cone} Suppose that $D_1$ and $D_2$ are arc diagrams for isotopic framed links in $M$.    Then the shadow cones $CO(\Sigma, D_1)$ and $CO(\Sigma, D_2)$ are shadow equivalent.
\end{theorem}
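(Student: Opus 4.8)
The plan is to deduce Theorem~\ref{thm:cone} from Theorem~\ref{thm:amovessuffice} by showing that each of the three arc moves (Finger, Vertex, Exchange), as well as spinal isotopy, lifts to a sequence of Turaev's shadow moves on the associated shadow cones, and that the shadow cone construction $D \mapsto CO(\Sigma, D)$ is functorial enough for this to make sense. So the first step is to recall precisely which shadow moves Turaev allows: the moves $P_1, P_2, P_3$ (and their inverses) of \cite{Turaev}, IX.6, together with the fact that shadow equivalence is by definition the equivalence relation they generate. I would then fix the setup: a trivalent spine $\Sigma$ with all gleams $0$, a crossingless diagram $D$ for a framed link $L$, and the shadow cone $CO(\Sigma, D)$, in which $D$ bounds a new ``cone face'' whose gleam encodes the framing. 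The key point is that all our arc moves are \emph{local}: each is supported in a ball (Finger, Vertex) or in a neighbourhood of an arc (Exchange), so it suffices to exhibit the required shadow moves in that local picture, leaving the rest of the shadowed polyhedron—including all gleams—untouched.

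Next I would treat the moves one at a time. For the Vertex move, the arc of $D$ is pushed across a vertex of $\Sigma$; locally this changes the cone face and the surrounding faces in a way that I expect to be a single $P_2$-type move (the ``vertex'' shadow move, which slides a face past a vertex) possibly preceded and followed by $P_1$ bubble moves to match the cone-face gleam. For the Finger move, a strand of $D$ is pushed across an edge of $\Sigma$ into an adjacent face; on the shadow cone this is the move that reattaches part of the cone face along the edge, which should be realised by a $P_3$ move (the ``triangle'' move) together with gleam bookkeeping: one needs to check that the gleam changes predicted by the shadow move on the two relevant faces cancel against the change in the cone-face gleam, so that the framing is preserved. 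Because the Exchange move is, by Remark~\ref{rmk:Exchange move}, a composite of a Reidemeister~II move, two Clearing moves, and two Finger moves, and the diagrams on both sides are crossingless, I would handle it by instead realising these constituent pieces at the shadow level—though since the intermediate diagrams have crossings, the cleanest route is to verify the Exchange move directly in its arc-neighbourhood rather than via that decomposition. Spinal isotopy is immediate: it does not change the combinatorial type of the shadowed polyhedron at all.

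The step I expect to be the main obstacle is the gleam bookkeeping. Turaev's shadow moves do not preserve individual gleams—$P_2$ and $P_3$ redistribute gleams among the faces involved according to fixed rules—so for each arc move I must check that the net effect on the total configuration is exactly the change in $CO(\Sigma, D)$ caused by the move on $D$, i.e.\ that the framing-encoding gleam on the cone face ends up correct. This requires carefully tracking half-integer versus integer constraints (which depend on local orientability/structure of the polyhedron, and the cone face is typically a non-orientably-embedded or ``singular'' face) and inserting the right number of $P_1$ bubbles to absorb discrepancies. I would organise this as a lemma: ``each arc move on $D$ induces, via explicit local shadow moves, a shadow equivalence $CO(\Sigma, D) \simeq CO(\Sigma, D')$,'' prove it by the case analysis above with a figure for each move (mirroring Figures~\ref{fig:pmoves} and \ref{fig:exchangeMoveDerivation}), and then Theorem~\ref{thm:cone} follows immediately: Theorem~\ref{thm:amovessuffice} gives a finite sequence of arc moves and spinal isotopies from $D_1$ to $D_2$, and concatenating the corresponding shadow equivalences yields $CO(\Sigma, D_1) \simeq CO(\Sigma, D_2)$.
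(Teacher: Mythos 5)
Your proposal takes exactly the same route as the paper: deduce the theorem from Theorem~\ref{thm:amovessuffice} by checking that each arc move (Finger, Vertex, Exchange) and spinal isotopy induces a shadow equivalence on the associated shadow cones, so a finite sequence of arc moves between $D_1$ and $D_2$ concatenates to a shadow equivalence $CO(\Sigma,D_1)\simeq CO(\Sigma,D_2)$. The paper simply asserts this translation as ``straightforward to show'' and treats the result as a corollary of Theorem~\ref{thm:amovessuffice}; your write-up supplies the case analysis and gleam bookkeeping that the paper leaves implicit, but the underlying argument is identical.
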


\printbibliography

\end{document}